\tikzstyle{vertex}=[circle, draw, inner sep=0pt, minimum size=6pt]
\newcommand{\vertex}{\node[vertex]}
\tikzstyle{fillvertex}=[circle, draw, inner sep=0pt, minimum size=6pt, fill=black]
\newcommand{\fillvertex}{\node[fillvertex]}
\tikzstyle{bigvertex}=[circle, draw, inner sep=0pt, minimum size=10pt]
\newcolumntype{L}[1]{>{\raggedright\let\newline\\\arraybackslash\hspace{0pt}}m{#1}}
\newcolumntype{C}[1]{>{\centering\let\newline\\\arraybackslash\hspace{0pt}}m{#1}}
\newcolumntype{R}[1]{>{\raggedleft\let\newline\\\arraybackslash\hspace{0pt}}m{#1}}
\newtheorem{theorem}{Theorem}[section]
\newtheorem{proposition}[theorem]{Proposition}
\newtheorem{lemma}[theorem]{Lemma}
\newtheorem{corollary}[theorem]{Corollary}
\theoremstyle{definition}
\newtheorem{definition}{Definition}[section]
\newtheorem{conjecture}[theorem]{Conjecture}
\theoremstyle{remark}
\newcommand{\atrel}{\mathrm{Rel}}
\newcommand{\splitRel}{\mathrm{spRel}}
\renewcommand{\Re}{\mathrm{Re}}
\renewcommand{\Im}{\mathrm{Im}}
\begin{document}

%% Title, authors and addresses

\title{On the roots of all-terminal reliability polynomials}
\author{Jason Brown and Lucas Mol} 
\date{}

\maketitle

\begin{abstract}
Given a graph $G$ in which each edge fails independently with probability $q\in[0,1],$ the \textit{all-terminal reliability} of $G$ is the probability that all vertices of $G$ can communicate with one another, that is, the probability that the operational edges span the graph.  The all-terminal reliability is a polynomial in $q$ whose roots (\textit{all-terminal reliability roots}) were conjectured to have modulus at most $1$ by Brown and Colbourn.  Royle and Sokal proved the conjecture false, finding roots of modulus larger than $1$ by a slim margin.  Here, we present the first nontrivial upper bound on the modulus of any all-terminal reliability root, in terms of the number of vertices of the graph.  We also find all-terminal reliability roots of larger modulus than any previously known.  Finally, we consider the all-terminal reliability roots of simple graphs; we present the smallest known simple graph with all-terminal reliability roots of modulus greater than $1,$ and we find simple graphs with all-terminal reliability roots of modulus greater than $1$ that have higher edge connectivity than any previously known examples.
\end{abstract}

\noindent
\textit{Keywords:} graph polynomials, all-terminal reliability, polynomial roots, Brown-Colbourn Conjecture

\noindent
\textit{MSC 2010:} 05C31

\section{Introduction and Background}

Let $G = (V,E)$ be an undirected, finite, loopless, connected (multi)graph in which each edge fails independently with probability $q \in [0,1]$ and vertices are always reliable.  The \textit{all-terminal reliability} of $G,$ denoted $\atrel(G;q),$ is the probability that all vertices of $G$ can communicate with one another; that is, the probability that the operational edges span the graph.  All-terminal reliability is a well-studied model of network robustness, and much research has been carried out on a variety of algorithmic and theoretical aspects of the model including algorithmic complexity, polynomial time algorithms for restricted families, efficient bounding procedures, the existence of optimal graphs, and analytic properties (when the all-terminal reliability of a graph is viewed as a function of $q$).  See  \cite{ColbournBook}, for example, or \cite{BrownColbournSurvey} for a more recent survey on all-terminal reliability.  Note that all-terminal reliability is often studied in terms of $p=1-q,$ the probability that each edge is operational, but our results on all-terminal reliability are easier to state and prove in terms of $q,$ so we deal exclusively in the variable $q$ in this article.

The all-terminal reliability of a connected graph $G$ with edge set $E,$ denoted $\atrel(G;q),$ is always a polynomial in $q$ of degree (at most) $m = |E|$, as a subgraph with operational edges $E'\subseteq E$ arises with probability 
\[
(1-q)^{|E'|}q^{|E|-|E'|}.
\]
Summing this probability over all sets $E'$ for which all vertices of $G$ can communicate gives the all-terminal reliability of $G.$  The fact that the polynomial has degree exactly $m$ will be seen later from the $H$-form of the polynomial.

It is natural to inquire about the nature and location of the roots of all-terminal reliability polynomials, called \textit{all-terminal reliability roots} or \textit{ATR roots} henceforth.  
ATR roots were noted to have modulus at most $1$ (in $q$) for small graphs, and it was conjectured in \cite{BCConjecture} that this was the case for all graphs.  This contrasts sharply with what is known for other graph polynomials, such as  chromatic polynomials \cite{SokalChromaticPoly}, independence polynomials \cite{BrownHickmanNowakowskiRoots}, and domination polynomials \cite{Domination}, where the roots are dense in the complex plane.  Despite some results and generalizations in the affirmative \cite{SokalHalfPlane,WagnerSeriesParallel}, the conjecture for ATR roots was shown to be false in \cite{BCFalse}.  However:
\begin{itemize}
\item The ATR roots provided were only outside of the unit disk by a slim margin; the largest modulus of an ATR root found was approximately $1.04$.
\item The simple graphs with ATR roots outside of the unit disk were quite large, with the smallest having over $1500$ vertices and over $3000$ edges.
\item All of the simple graphs with ATR roots outside of the unit disk had many vertices of degree $2,$ and it is unclear whether all simple graphs with ATR roots outside of the unit disk have such low edge connectivity.
\end{itemize}
Finally, although ATR roots of modulus greater than $1$ were found, no general upper bound on the modulus of an ATR root was given.

In this article, we continue the exploration of the location of ATR roots. In Section \ref{ATRUpperBound}, we find a nontrivial (though non-constant) bound on the modulus of any ATR root of a graph $G$ in terms of the order of $G$ (this extends a weaker result in \cite{DanielleThesis}).  In Section \ref{LargerModulus}, we study graphs with ATR roots of modulus greater than $1$, finding graphs with ATR roots of greater modulus than any previously known.  Finally, in Section \ref{SimpleGraphSection} we consider simple graphs with ATR roots of modulus greater than $1.$  We find a smaller example of a simple graph with ATR roots outside of the unit disk, and we find simple graphs that have ATR roots outside of the unit disk and have much higher edge connectivity than any previously known examples.

We shall need some background on reliability for the next section. For a graph $G$ of order $n$ and size $m$ (that is, with $n$ vertices and $m$ edges), we can express the all-terminal reliability polynomial of $G$ as 
\begin{align*}
\atrel(G;q)=\sum_{i=0}^{m-n+1}F_iq^i(1-q)^{m-i},
\end{align*}
where $F_i$ denotes the number of subsets of $E$ of cardinality $i$ whose removal leaves the graph connected.  The collection of all such subsets of $G$ is called the \textit{cographic matroid} of $G,$ and the sequence $(F_0,\hdots,F_{m-n+1})$ is called the \textit{$F$-vector} of the cographic matroid of $G$ (see \cite{ColbournBook} for more detail).  The generating polynomial of the $F$-vector of $G$ is called the \textit{$F$-polynomial} of $G,$ denoted $F(G;x).$  It is known that one can rewrite the polynomial in its \textit{$H$-form} as
\[
\atrel(G;q)=(1-q)^{n-1}\sum_{k=0}^{m-n+1}H_k q^k.
\] 
The sequence $(H_0,\hdots, H_{m-n+1})$ is called the \textit{$H$-vector} of the cographic matroid of $G$.  Moreover, the generating polynomial
\[
H(G;x)=\sum_{k=0}^{m-n+1}H_kx^k
\]
of the $H$-vector turns out to be an evaluation of the well-known two-variable \textit{Tutte polynomial} (see \cite{MerinoMatroid}):
\begin{align}
T(G;1,x)&=H(G;x).\label{tutteconnection}
\end{align}

There is a relatively new interpretation to the coordinates of the $H$-vector of a cographic matroid that we shall find particularly useful.  We describe the \textit{chip-firing game} that yields this new interpretation.  Let $G=(V,E)$ be a connected multigraph without loops, and let $w$ denote a special vertex of $G$.  A \textit{configuration} of $G$ is a function $\theta:V\rightarrow \mathbb{Z}$ for which $\theta(v)\geq 0$ for all $v\neq w$ and $\theta(w)=-\displaystyle\sum_{v\neq w}\theta(v).$  For $v\neq w,$ the number $\theta(v)$ represents the number of chips on vertex $v.$  We imagine that the special vertex $w$ has infinitely many chips.  In configuration $\theta,$ a vertex $v\neq w$ is \textit{ready to fire} if $\theta(v)\geq \deg(v)$; vertex $w$ is ready to fire if and only if no other vertex is ready.  \textit{Firing} vertex $u$ changes the configuration from $\theta$ to $\theta',$ where 
\[
\theta'(u)=\theta(u)-\deg(u)
\]
and for $v\neq u$
\[
\theta'(v)=\theta(v)+l(u,v),
\]
where $l(u,v)$ is the number of edges between $u$ and $v$ in $G.$  A configuration is \textit{stable} when $\theta(v) < \deg(v)$ for all $v\neq w$; that is, if and only if $w$ is ready to fire.

A \textit{firing sequence} $\Theta=(\theta_0,\theta_1,\hdots, \theta_k)$ is a sequence of configurations in which $\theta_i$ is obtained from $\theta_{i-1}$ by firing one vertex that is ready to fire for each $i\in\{1,\hdots, k\}.$  It is \textit{nontrivial} when $k>0.$  We write $\theta_0\rightarrow\theta_k$ when some nontrivial firing sequence starting with $\theta_0$ and ending with $\theta_k$ exists.  Configuration $\theta$ is \textit{recurrent} if $\theta\rightarrow \theta.$  Stable, recurrent configurations are called \textit{critical}.  For a critical configuration $\theta,$ a \textit{critical sequence} is a legal firing sequence of minimal length that makes $\theta$ recur. Merino \cite{MerinoTutte,MerinoMatroid} proved the following surprising result which connects the critical configurations to all-terminal reliability:

\begin{quotation}
Let $\mathcal{C}$ be the set of all critical configurations of $G.$  For each $v\in V\backslash \{w\},$ let $x_v$ be a variable, and for each $\theta\in\mathcal{C},$ define a monomial 
\[
m_\theta=\prod_{v\in V\backslash\{w\}}x_v^{\deg(v)-1-\theta(v)}.
\]
Then the set $\mathcal{M} = \mathcal{M}_w(G)$ of all such monomials is an \textit{order ideal of monomials}; that is, the set is closed under division.  Further, $\mathcal{M}$ is \textit{pure}; that is, all of the maximal monomials (under division) are of the same degree, which turns out to be $m-n+1.$ Moreover, the number of monomials of degree $i$ in $\mathcal{M}$ is precisely $H_{i}$, the $i$th coordinate of the $H$-vector of the cographic matroid of $G.$  In particular, $H_i$ is strictly positive for all $i\in\{0,\hdots,m-n+1\}.$
\end{quotation}

Finally, we remark that Huh \cite{Huh} recently settled an outstanding conjecture on $H$-vectors of matroids, which implies that the $H$-vector of the cographic matroid of a graph $G$ is always \textit{log-concave}, that is,
\[
H_i^2\geq H_{i-1}H_{i+1}
\]
for all $i\in\{1,\hdots,m-n\}.$

\section{An upper bound on the modulus of any ATR root}\label{ATRUpperBound}

We begin by proving an upper bound on the modulus of any ATR root of a $2$-connected graph.  A key tool in the proof is the well-known \textit{Enestr\"{o}m-Kakeya Theorem} (see \cite{EnestromKakeya}, for example), which states that if $f(x)=\displaystyle\sum_{i=0}^{d} a_{i}x^{i}$ is a polynomial with positive coefficients, then the (complex) roots of $f$ lie in the annulus
\[ 
\mbox{min} \left( \left\{ \frac{a_{i-1}}{a_{i}}  : i = 1,\ldots,d \right\} \right) \leq |z| \leq \mbox{max} \left( \left\{ \frac{a_{i-1}}{a_{i}}: i = 1,\ldots,d \right\} \right).
\]

\begin{theorem}\label{mainrelbound}
If $G$ is a $2$-connected graph of order $n$ then any root $z$ of $\atrel(G;q)$ satisfies $|z|\leq n-1$.  Moreover, if $n\geq 3$ and $G$ has a vertex $w$ with no incident multiple edges (in particular if $G$ is a simple graph), then $|z|\leq n-2$.
\end{theorem}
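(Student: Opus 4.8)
The plan is to apply the Eneström–Kakeya Theorem to a polynomial whose coefficients are positive and whose coefficient ratios we can control. The natural candidate is the $H$-polynomial $H(G;x)=\sum_{k=0}^{m-n+1}H_kx^k$: by Merino's theorem all the $H_k$ are strictly positive, so the roots of $H(G;x)$ lie in the annulus bounded above by $\max_k H_{k-1}/H_k$. Since $\atrel(G;q)=(1-q)^{n-1}H(G;q)$, the ATR roots are exactly $q=1$ together with the roots of $H(G;q)$, so it suffices to bound $\max_k H_{k-1}/H_k$ by $n-1$ (respectively $n-2$). So the whole problem reduces to the combinatorial inequality $H_{k-1}\le (n-1)H_k$ for every $k\in\{1,\dots,m-n+1\}$, and the sharper $H_{k-1}\le (n-2)H_k$ under the extra hypothesis.

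First I would set up the chip-firing interpretation: fix a special vertex $w$ (chosen, in the second part, to have no incident multiple edges), and recall that $H_k$ counts the degree-$k$ monomials in the pure order ideal $\mathcal{M}=\mathcal{M}_w(G)$. I would then bound $H_{k-1}/H_k$ by constructing an injection-with-bounded-fibers between degree-$(k-1)$ and degree-$k$ monomials, or more precisely by a counting argument: every degree-$(k-1)$ monomial $m$ in $\mathcal{M}$, multiplied by any variable $x_v$, either lands in $\mathcal{M}$ or not; since $\mathcal{M}$ is pure of top degree $m-n+1>k-1$, every maximal monomial above $m$ has degree $>k-1$, so $m\cdot x_v\in\mathcal{M}$ for \emph{at least one} variable $x_v$. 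Conversely each degree-$k$ monomial $m'$ in $\mathcal{M}$ arises as $m\cdot x_v$ from at most (number of variables appearing in $m'$) $\le n-1$ choices of the pair $(m,v)$, since there are $n-1$ variables $x_v$ indexed by $V\setminus\{w\}$. Counting the set of pairs $\{(m,v): \deg m=k-1,\ m\in\mathcal M,\ m x_v\in\mathcal M\}$ two ways gives $H_{k-1}\le \sum_{m}(\text{number of good }v)$ on one side but I actually want the reverse pairing: each $m$ of degree $k-1$ contributes at least one pair and each $m'$ of degree $k$ absorbs at most $n-1$ pairs, giving $H_{k-1}\le (n-1)H_k$.

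For the improvement to $n-2$: here I would use the constraint that the exponent of $x_v$ in any monomial of $\mathcal{M}$ is at most $\deg(v)-1$, and, crucially, that a degree-$(k-1)$ monomial $m$ with $k-1\le m-n+1-1$, i.e.\ $m$ not maximal, must have at least one variable with exponent strictly below its cap — but more to the point, for a monomial $m'$ of degree $k\ge 1$ to be written as $mx_v$ with $m\in\mathcal M$ we need $x_v\mid m'$, and I claim that for graphs with $n\ge 3$ not every variable can be "essential" in this way simultaneously, or that the vertex $w$ with no incident multiple edges forces at least one neighbor $u$ of $w$ to have $\deg_G(u)-1$-cap interacting with $w$ so that the number of usable variables for building $m'$ is at most $n-2$. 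I would make this precise by showing: if $w$ has no incident multiple edges then some specific variable (say corresponding to a neighbor of $w$, or the one whose omission still leaves a valid critical configuration) can be excluded, reducing the $n-1$ count to $n-2$; alternatively, one shows directly that $H_0=1$ forces the chain to "start" in a way that removes one unit from the worst ratio.

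The main obstacle I expect is precisely this second part: justifying the drop from $n-1$ to $n-2$ requires genuinely using the no-incident-multiple-edge hypothesis on $w$, and the cleanest route may not be through the crude pair-counting above but through a more careful analysis of which variables $x_v$ can fail to extend a given degree-$(k-1)$ monomial — equivalently, a structural statement about critical configurations of $G$ relative to a vertex $w$ with simple incidences. The first part ($n-1$) should be routine once the "pure order ideal $\Rightarrow$ every non-maximal monomial extends by some variable" observation is in hand; the second part is where the real work lies, and I would want to phrase it so that the extra hypothesis is used exactly once, to forbid one variable from ever being the unique extender.
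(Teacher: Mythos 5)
Your reduction to the $H$-polynomial and your proof of the first bound are correct, and in fact your per-degree pair count is a genuinely different (and slightly more elementary) route than the paper's: the paper counts pairs only at the top degree, showing $H_{m-n}\le (n-1)H_{m-n+1}$, and then invokes Huh's log-concavity theorem for $H$-vectors of matroids to conclude that this top ratio dominates every ratio $H_{k-1}/H_k$ before applying the Enestr\"om--Kakeya theorem; your observation that purity plus the order-ideal property makes every non-maximal monomial extend by a single variable yields $H_{k-1}\le (n-1)H_k$ for \emph{every} $k$ directly, so you dispense with log-concavity altogether. The fiber bound is the same in both arguments: a monomial of $\mathcal{M}_w(G)$ is divisible by at most $n-1$ of the variables $x_v$, $v\in V\setminus\{w\}$.

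The second bound is where your proposal has a genuine gap, and you flag it yourself without closing it. The hypothesis that $w$ has no incident multiple edges is used through one precise structural fact about the chip-firing game with $w$ as the special vertex: \emph{every} critical configuration $\theta$ has a neighbour $v$ of $w$ with $\theta(v)=\deg(v)-1$. Indeed, $\theta$ is stable, so only $w$ is ready to fire, and when $w$ fires it sends exactly one chip to each of its neighbours (this is exactly where the no-multiple-edges hypothesis enters); if every neighbour $v$ had $\theta(v)\le\deg(v)-2$, then after $w$ fires no vertex would be ready, contradicting recurrence of $\theta$. Since the exponent of $x_v$ in $m_\theta$ is $\deg(v)-1-\theta(v)=0$, the variable $x_v$ does not divide $m_\theta$, so every monomial of $\mathcal{M}_w(G)$ is divisible by at most $n-2$ variables; feeding this into your (or the paper's) pair count gives $H_{k-1}\le (n-2)H_k$ and hence $|z|\le n-2$. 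Your sketch gestures at ``excluding some variable corresponding to a neighbour of $w$'' or at an argument from $H_0=1$, but neither is made precise, and the alternatives you float (the caps $\deg(v)-1$ on exponents, non-maximality of low-degree monomials) do not by themselves use the hypothesis on $w$ at all; without the stability-plus-recurrence argument above, the improvement from $n-1$ to $n-2$ is not established, so as written the second statement of the theorem remains unproved.
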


\begin{proof}
Let $G$ be a $2$-connected graph of order $n$ and size $m.$  We focus on the generating function
\[ 
H(G;z) = \sum_{i=0}^{m-n+1} H_{i} z^{i}
\]
for the $H$-vector of the cographic matroid of $G$.  For the first statement of the theorem it suffices to show that the roots of $H(G;z)$ lie in the disk $|z| \leq n-1,$ as $\atrel(G;q)=(1-q)^{n-1}H(G;q).$ 

It follows directly from Huh's result on the log-concavity of the $H$-vector that
\[
\frac{H_{i-1}}{H_i}\leq \frac{H_i}{H_{i+1}}
\]
for all $i\in\{1,\hdots,m-n\}.$  Thus we have 
\[
\frac{H_{i-1}}{H_i}\leq \frac{H_{m-n}}{H_{m-n+1}}
\]
for all $i\in\{1,\hdots, m-n\}.$
It follows that  
\[ 
\mbox{max}  \left( \left\{ \frac{H_{i-1}}{H_i} : i = 1,\ldots,m-n+1 \right\} \right) = \frac{H_{m-n}}{H_{m-n+1}},
\]
and thus by the Enestr\"om-Kakeya Theorem, the roots of $H(G;z)$ have modulus bounded above by $H_{m-n}/H_{m-n+1}$.  We now bound $H_{m-n}/H_{m-n+1}$ from above.

By the results of Merino mentioned in the previous section, the pure order ideal of monomials $\mathcal{M} = \mathcal{M}_w(G)$ has $H_{i}$ monomials of degree $i$ for all $i\in\{0,\hdots,m-n+1\}.$  Consider the set 
\[ S = \{(m_\theta,x)\colon\ m_\theta \in \mathcal{M} \mbox{ of degree } m-n+1,~x \mbox{ is a variable that divides } m_\theta \}.\]
For each $(m_\theta,x) \in S$, note that $m_\theta/x$ is a monomial of degree $m-n$ in $\mathcal{M}$, and the purity of $\mathcal{M}$ ensures that each monomial of degree $m-n$ in $\mathcal{M}$ appears at least once under this construction.  It follows that $H_{m-n}\leq |S|$, and since $|S| \leq(n-1)H_{m-n+1}$, we have that 
\[ \frac{H_{m-n}}{H_{m-n+1}} \leq n-1,\]
so that the roots of $H(G;z)$ lie in the disk $|z|\leq n-1,$ and we have proven the first statement.

For the second statement, let $G$ be as above with $n\geq 3$ and let $\mathcal{M}=\mathcal{M}_w(G)$ as above where $w$ is now a vertex of $G$ without any incident multiple edges.  It suffices to show that the roots of $H(G;z)$ lie in the disk $|z|\leq n-2.$  Let $\theta$ be a critical configuration with corresponding monomial 
\[
m_\theta=\displaystyle\prod_{v\in V\backslash\{w\}}x_v^{\deg(v)-1-\theta(v)}.
\] 
There exists a neighbour $v$ of $w$ such that $\theta(v)=\deg(v)-1$, as otherwise no vertex is ready to fire after $w$ (when $w$ fires it sends only one chip to each of its neighbours), and hence $x_v\nmid m_\theta.$  Thus every monomial in $\mathcal{M}$ is divisible by at most $n-2$ variables, and the same pair counting argument as in the previous part of the proof demonstrates that 
\[ H_{m-n}\leq (n-2)H_{m-n+1}.\]
By the Enestr\"om-Kakeya Theorem, we conclude that the roots of $H(G;z)$ lie in the disk $|z|\leq n-2.$
\end{proof}

Since the all-terminal reliability of a graph is obviously multiplicative over its blocks, the following corollary is immediate.

\begin{corollary}
Let $G$ be a connected graph of order $n\geq 2$ in which the blocks of maximum order have order $s.$  Then any root $z$ of $\atrel(G;q)$ satisfies $|z|\leq s-1.$  Further, if $s\geq 3$ and every block $B$ of order $s$ has a vertex with no incident multiple edges in $B$ (in particular if $G$ is a simple graph), then $|z|\leq s-2.$ \hfill \qed
\end{corollary}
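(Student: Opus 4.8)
The plan is to reduce the corollary to Theorem~\ref{mainrelbound} by means of the block decomposition of $G$. Recall that all-terminal reliability is multiplicative over blocks: if $B_1,\dots,B_t$ are the blocks of $G$, then $\atrel(G;q)=\prod_{j=1}^{t}\atrel(B_j;q)$, so the multiset of ATR roots of $G$ is the union of the multisets of ATR roots of the $B_j$. Consequently it suffices to bound the modulus of every root of each factor $\atrel(B_j;q)$ by $s-1$ (respectively by $s-2$ under the extra hypothesis), and then take the maximum over $j$.

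So I would next examine a single block $B_j$, of order $n_j$. If $B_j$ is $2$-connected (which forces $n_j\ge 3$), then Theorem~\ref{mainrelbound} applies to $B_j$ directly: every root of $\atrel(B_j;q)$ has modulus at most $n_j-1$, and at most $n_j-2$ if $B_j$ has a vertex with no incident multiple edges in $B_j$. If $B_j$ is not $2$-connected, then $n_j=2$ and $B_j$ is a set of $\ell\ge 1$ parallel edges joining two vertices, so $\atrel(B_j;q)=1-q^{\ell}$, whose roots are $\ell$th roots of unity and hence all have modulus $1=n_j-1$. In either case, every root of $\atrel(B_j;q)$ has modulus at most $n_j-1\le s-1$, which proves the first statement. (When $s=2$ every block is such a parallel bundle and every ATR root lies on the unit circle, so the bound $|z|\le s-1=1$ is sharp.)

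For the second statement, assume $s\ge 3$ and that every block of order $s$ has a vertex with no incident multiple edges within it. A block $B_j$ with $n_j<s$ satisfies $n_j\le s-1$, so the bounds just established give modulus at most $n_j-1\le s-2$ for each of its ATR roots (the inequality $n_j-1\le s-2$ covers the degenerate case $n_j=2$ precisely because $s\ge 3$). A block $B_j$ with $n_j=s$ is $2$-connected (since $s\ge 3$) and, by hypothesis, has a vertex with no incident multiple edges in $B_j$, so the second part of Theorem~\ref{mainrelbound} bounds the modulus of each of its ATR roots by $s-2$. Taking the maximum over all blocks completes the argument. The whole proof is routine once Theorem~\ref{mainrelbound} is in hand; the only points needing a moment's care are the degenerate blocks (bridges and bundles of parallel edges, which are not $2$-connected in the strict sense) and the edge case $s=2$, for which the strengthened bound is not claimed, so I do not anticipate any genuine obstacle.
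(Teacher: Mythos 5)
Your proof is correct and follows essentially the same route as the paper, which simply notes that $\atrel(G;q)$ is multiplicative over blocks and deduces the corollary immediately from Theorem~\ref{mainrelbound}. Your extra care with the order-$2$ blocks (bridges and parallel bundles, where $\atrel=1-q^{\ell}$ has all roots on the unit circle) and the case $s=2$ just makes explicit what the paper treats as obvious.
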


A consequence of the proof of Theorem \ref{mainrelbound} is that
\[
\frac{H_{m-n}(G)}{H_{m-n+1}(G)}\leq n-2
\]
for any simple graph $G$ of order $n$ and size $m\geq n.$  While this bound is not best possible, we are off by at most a factor of $2,$ as one can show that
\[
\frac{H_{m-n}(K_n)}{H_{m-n+1}(K_n)}=\frac{n-2}{2}
\] 
for all $n$ by considering the critical configurations of the chip-firing game on $K_n.$

\section{More reliability roots outside of the unit disk}

We now turn to providing more examples of graphs with ATR roots outside of the unit disk centred at the origin of the complex plane (referred to simply as the \textit{unit disk} henceforth). 
 
\subsection{Reliability roots of larger modulus}\label{LargerModulus}

Brown and Colbourn investigated the roots of all-terminal reliability polynomials in \cite{BCConjecture} where they made the following conjecture.

\begin{conjecture}[Brown-Colbourn Conjecture]
Let $G$ be a connected graph.  If $z$ is a root of $\atrel(G;q)$ then $|z|\leq 1.$  In other words, ATR roots all lie inside the unit disk.
\end{conjecture}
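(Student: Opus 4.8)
The natural plan is to follow the route that yielded the bounds of Theorem~\ref{mainrelbound}: write $\atrel(G;q) = (1-q)^{n-1}H(G;q)$, so that it suffices to show every root of the generating polynomial $H(G;z)=\sum_i H_i z^i$ of the $H$-vector of the cographic matroid lies in $|z|\le 1$. Since the $H$-vector is positive and log-concave, the Enestr\"{o}m--Kakeya theorem confines the roots of $H(G;z)$ to the disk of radius $H_{m-n}/H_{m-n+1}$, so the conjecture would follow from the clean inequality $H_{m-n}\le H_{m-n+1}$ --- equivalently, in Merino's chip-firing picture, from the order ideal $\mathcal{M}_w(G)$ having at least as many monomials of top degree $m-n+1$ as of degree $m-n$.

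First I would test this inequality on small and structured families --- cycles, theta graphs, complete and complete bipartite graphs, series-parallel networks --- both to build intuition and to look for a counting argument generalizing the pair-counting in the proof of Theorem~\ref{mainrelbound}. This is precisely where the attempt falls apart: one has $H_{m-n}(K_n)/H_{m-n+1}(K_n)=(n-2)/2$ (as observed just after Theorem~\ref{mainrelbound}), which exceeds $1$ once $n\ge 5$, so the Enestr\"{o}m--Kakeya bound is far too crude to reach modulus $1$. One would then be pushed onto the subtler machinery behind the known partial results --- the half-plane property exploited by Sokal~\cite{SokalHalfPlane}, or the series-parallel reductions of Wagner~\cite{WagnerSeriesParallel} --- which do give $|z|\le 1$, but only for restricted classes of graphs.

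The decisive obstacle is that no argument can succeed in full generality, because the conjecture is in fact false: Royle and Sokal~\cite{BCFalse} exhibit connected graphs with ATR roots of modulus roughly $1.04$. So in place of a proof, the sensible response --- and the one pursued in the remainder of this article --- is to make the failure quantitative by bounding the modulus of any ATR root by $n-1$ (or $n-2$ in the simple case), as in Theorem~\ref{mainrelbound}; to hunt for graphs whose ATR roots lie as far outside the unit disk as possible; and to find small or highly edge-connected simple graphs that still violate the bound $1$.
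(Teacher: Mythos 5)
You are right not to offer a proof here: this statement appears in the paper only as a conjecture, and the paper itself records (citing Royle and Sokal \cite{BCFalse}) that it is false, so there is no proof of it to compare against. Your diagnosis of why the Enestr\"{o}m--Kakeya/log-concavity route cannot reach modulus $1$ --- namely that $H_{m-n}(K_n)/H_{m-n+1}(K_n)=(n-2)/2>1$ once $n\ge 5$ --- agrees with the paper's remark following Theorem \ref{mainrelbound}, and your conclusion (quantify the failure via the bounds of Theorem \ref{mainrelbound} and seek worse counterexamples) matches the paper's treatment exactly.
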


While Wagner proved that the Brown-Colbourn conjecture is true for series-parallel graphs \cite{WagnerSeriesParallel}, the conjecture was proven false in general by Sokal and Royle \cite{BCFalse}.  However, the largest known modulus of an ATR root is approximately $1.04,$ as noted in \cite{SCRel}.  We improve on this here, finding ATR roots that are almost three times further outside of the unit disk.

We generalize the graphs that were found to have ATR roots outside of the unit disk in \cite{BCFalse}.  For positive integers $m,$ $n,$ $a,$ and $b,$ let $G_{m,n}^{a,b}$ be the graph on $m+n$ vertices defined as follows.  Take disjoint complete graphs $K_m$ and $K_n$ and replace every edge by $a$ edges in parallel (i.e.\ replace each edge with a \textit{bundle} of $a$ edges), and then connect every nonadjacent pair of vertices with $b$ parallel edges.  The graphs $G_{3,2}^{1,2}$ and $G_{2,2}^{6,1}$ are shown in Figure \ref{GmnabDrawing}.  The graph $G_{2,2}^{6,1}$ is the smallest multigraph (in order and size) known to have ATR roots outside of the unit disk, and $G_{3,3}^{1,6}$ has the ATR root with the largest known modulus of approximately 1.04 (see \cite{BCFalse}).  We will see that $G_{n,n}^{1,6}$ has ATR roots of even larger modulus for $n>3.$ The following result gives us a way to compute $\atrel(G_{m,n}^{a,b};q).$

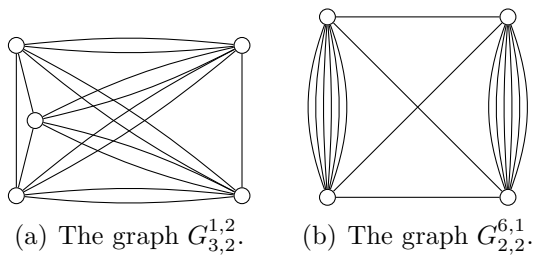
\begin{figure}[h]
\centering
\subfigure[The graph $G_{3,2}^{1,2}.$]{
\centering
\begin{tikzpicture}
\vertex (11) at (0.25,1) {};
\vertex (00) at (0,0) {};
\vertex (02) at (0,2) {};
\vertex (30) at (3,0) {};
\vertex (32) at (3,2) {};
\path
(00) edge (11)
(11) edge (02)
(02) edge (00)
(30) edge (32)
(11) edge[bend left=7] (32)
(11) edge[bend right=7] (32)
(02) edge[bend left=6] (32)
(02) edge[bend right=6] (32)
(00) edge[bend left=6] (32)
(00) edge[bend right=6] (32)
(11) edge[bend left=7] (30)
(11) edge[bend right=7] (30)
(02) edge[bend left=6] (30)
(02) edge[bend right=6] (30)
(00) edge[bend left=6] (30)
(00) edge[bend right=6] (30);
\end{tikzpicture}
}
~
\subfigure[The graph $G_{2,2}^{6,1}.$]{
\begin{tikzpicture}[scale=1.2]
\vertex (11) at (2,2) {};
\vertex (00) at (0,0) {};
\vertex (01) at (0,2) {};
\vertex (10) at (2,0) {};
\path
(00) edge (11)
(00) edge (10)
(01) edge (11)
(01) edge (10)
(10) edge[bend left=20] (11)
(10) edge[bend left=12] (11)
(10) edge[bend left=4] (11)
(10) edge[bend right=20] (11)
(10) edge[bend right=12] (11)
(10) edge[bend right=4] (11)
(00) edge[bend left=20] (01)
(00) edge[bend left=12] (01)
(00) edge[bend left=4] (01)
(00) edge[bend right=20] (01)
(00) edge[bend right=12] (01)
(00) edge[bend right=4] (01);
\end{tikzpicture}
}
\caption{Two examples of the graph $G_{m,n}^{a,b}.$}
\label{GmnabDrawing}
\end{figure}

\begin{proposition}\label{GmnabRecursion}
Let $m,$ $n,$ $a,$ and $b$ be positive integers.  Then
\begin{align}\label{GmnabEquation}
\sum_{i=1}^m\sum_{j=0}^n\tbinom{m-1}{i-1}\tbinom{n}{j}q^{a[i(m-i)+j(n-j)]+b[i(n-j)+j(m-i)]}\atrel\left(G_{i,j}^{a,b};q\right)=1.
\end{align}
\end{proposition}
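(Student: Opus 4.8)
The identity in Proposition~\ref{GmnabRecursion} has the flavor of a partition/deletion–contraction argument, so my plan is to interpret both sides probabilistically and condition on the structure of the operational subgraph restricted to one side of the construction. Fix the graph $G_{m,n}^{a,b}$ and consider the $m$-set $A$ of vertices carrying the $K_m$ (with $a$-fold edges) and the $n$-set $B$ carrying the $K_n$. The left-hand side is $1$, which we should read as ``the all-terminal reliability of the trivial one-vertex graph,'' or equivalently as the statement that \emph{some} event happens with probability $1$. The natural move is: contract the whole of $G_{m,n}^{a,b}$ along all its edges and track how a random operational subgraph partitions $A\cup B$ into connected components — but more precisely, I would condition on which vertices of $A$ end up connected (through operational edges) to the vertex set $B$.

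Here is the combinatorial skeleton. In a random edge subset, let $A_0\subseteq A$ be the set of vertices of $A$ that are \emph{not} joined to $B$ by any operational edge, and similarly let $B_0\subseteq B$ be the vertices of $B$ not joined to $A$. Actually the cleaner conditioning is on the ``interface'': let $i=|A\setminus A_0|$ be the number of $A$-vertices with at least one operational edge to $B$, and $j=|B\setminus B_0|$ similarly — no wait, the presence of the binomial $\binom{m-1}{i-1}$ rather than $\binom{m}{i}$ signals that one distinguished vertex of $A$ is always in the ``active'' part, which is exactly what happens when we fix a root vertex $w\in A$ and ask for the component of $w$. So the right conditioning is: let $i$ be the size of the component structure on the $A$-side that merges with $B$, where $w$'s block is forced to contribute, and $j$ the analogous quantity on the $B$-side; there are $\binom{m-1}{i-1}\binom{n}{j}$ ways to choose which vertices these are (choosing $i-1$ of the remaining $m-1$ vertices of $A$, and $j$ of the $n$ vertices of $B$). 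The factor $q^{a[i(m-i)+j(n-j)]+b[i(n-j)+j(m-i)]}$ is precisely the probability that \emph{all} edges crossing between the ``active'' side ($i$ vertices of $A$ plus $j$ of $B$) and the ``inactive'' side ($m-i$ vertices of $A$ plus $n-j$ of $B$) fail: the $a$-bundles within $A$ contribute $i(m-i)$ such edge-bundles, the $a$-bundles within $B$ contribute $j(n-j)$, and the $b$-bundles between $A$ and $B$ contribute $i(n-j)+j(m-i)$. Finally $\atrel(G_{i,j}^{a,b};q)$ is the probability that, \emph{given} this interface, the active part is internally spanning — i.e.\ the induced subgraph on the $i+j$ active vertices is itself a copy of $G_{i,j}^{a,b}$ and must be connected by its operational edges.

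So the proof is to verify that the events indexed by $(i,j)$ — ``the component of $w$ in the operational subgraph, together with everything reachable from it, consists of exactly this prescribed set of $i$ vertices in $A$ and $j$ vertices in $B$, all mutually connected'' — partition the probability space. Every edge subset falls into exactly one class: take the connected component $C$ of $w$, set $i=|C\cap A|$ and $j=|C\cap B|$; then $w\in C\cap A$ forces $i\ge 1$, the edges between $C$ and its complement are exactly the ``failed'' ones counted by the exponent, the choice of $C$ is counted by the binomial coefficients, and the operational edges inside $C$ form a connected spanning subgraph of $G_{i,j}^{a,b}$, an event of probability $\atrel(G_{i,j}^{a,b};q)$ by independence of disjoint edge sets. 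Summing over all classes gives total probability $1$, which is the claimed identity. I would present this by writing out the probability of a single class as the product of the three factors (choice of interface, failure of crossing edges, internal reliability), then summing. One subtle point to get right: the edges \emph{inside} the inactive part $A_0\cup B_0$ are completely unconstrained in the event for class $(i,j)$ — they may be operational or not — so I must make sure I am conditioning only on the component of $w$ and not over-counting; the fact that $\sum$ telescopes to $1$ rather than to $\atrel$ of the whole graph is exactly because these inactive-part edges are free.

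The main obstacle I anticipate is bookkeeping the exponent of $q$ correctly and confirming the binomial coefficients: one must be careful that it is $\binom{m-1}{i-1}$ and not $\binom{m}{i}$, which pins down that a \emph{rooted} component argument (fixing $w\in A$) is the correct framing, and one must double-check the cross-term $b[i(n-j)+j(m-i)]$ comes out right — it counts unordered pairs with one endpoint active and one inactive across the $A$–$B$ divide, namely $i$ active $A$-vertices times $n-j$ inactive $B$-vertices, plus $j$ active $B$-vertices times $m-i$ inactive $A$-vertices. Once the partition is set up correctly, the identity is immediate from the independence of edge states and the definition of all-terminal reliability.
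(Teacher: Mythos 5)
Your proposal is correct and follows essentially the same argument as the paper: fix a vertex of the $K_m$ side, condition on its connected component $C$ in the operational subgraph (with $i\ge 1$ vertices from $K_m$ and $j$ from $K_n$), note that the probability that a given such $C$ is the component equals $q^{a[i(m-i)+j(n-j)]+b[i(n-j)+j(m-i)]}\atrel\left(G_{i,j}^{a,b};q\right)$, count the $\tbinom{m-1}{i-1}\tbinom{n}{j}$ choices of $C$, and sum to $1$. No gaps; your bookkeeping of the crossing-edge exponent and the rooted binomial coefficient matches the paper's proof exactly.
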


\begin{proof}
Let $K_m$ and $K_n$ be the complete graphs from which $G_{m,n}^{a,b}$ is formed.  Let $v$ be a vertex of $K_m.$  For a particular subset $C$ of vertices of $G_{m,n}^{a,b}$ which contains $v$ and has $i \geq 1$ vertices from $K_m$ and $j \geq 0$ vertices from $K_n,$ we calculate the probability that $C$ is a connected component in $G_{m,n}^{a,b}$ when edges fail randomly.  In order for $C$ to be a connected component in $G_{m,n}^{a,b}$ when edges fail randomly, all of the vertices of $C$ must be able to communicate with one another and all of the vertices of $C$ must be unable to communicate with any vertex outside of $C.$  The former occurs with probability $\atrel\left(G_{i,j}^{a,b};q\right)$ while the latter occurs with probability
\[
q^{a[i(m-i)+j(n-j)]+b[i(n-j)+j(m-i)]},
\]
as there are $a[i(m-i)+j(n-j)]+b[i(n-j)+j(m-i)]$ edges between $C$ and the remaining vertices of the graph.  A rough sketch of $G_{m,n}^{a,b}$ is provided in Figure \ref{GmnabPicture} to aid in the counting of edges.

\begin{figure}
\centering{
\begin{tikzpicture}
\draw (-1.5,-3) -- (1.5,-3) -- (1.5,3) -- (-1.5,3) -- (-1.5,-3);
\draw (2.5,-3) -- (5.5,-3) -- (5.5,3) -- (2.5,3) -- (2.5,-3);
\draw[dashed] (-1,2.75) -- (5,2.75) -- (5,0.5) -- (-1,0.5) -- (-1,2.75);
\draw (0,-3) node[below] {{$K_m$}};
\draw (4,-3) node[below] {{$K_n$}};
\draw (2,3) node {$C$};
\draw (1,0) node[above]{\scriptsize $a$};
\draw[dotted] (0.9,0) -- (1.1,0);
\draw (3,0) node[above]{\scriptsize $a$};
\draw[dotted] (2.9,0) -- (3.1,0);
\draw (2,-1) node[right]{\scriptsize $b$};
\draw[dotted] (2,-1.1) -- (2,-0.9);
\draw (2,1) node[right]{\scriptsize $b$};
\draw[dotted] (2,1.1) -- (2,0.9);
\draw (2,0) node[right]{\scriptsize $b$};
\draw[dotted] (2,0.1) -- (2,-0.1);
\vertex (v) at (1,1) [label=above:$v$]{};
\vertex (x) at (1,-1) {};
\vertex (y) at (3,1) {};
\vertex (z) at (3,-1) {};
\draw (0.3,2) node {$i$ vertices};
\draw (3.7,2) node {$j$ vertices};
\draw (0,-2) node {$m-i$ vertices};
\draw (4,-2) node {$n-j$ vertices};
\path
(v) edge[bend left=20] (y)
(v) edge[bend right=20] (y)
(v) edge[bend left=20] (z)
(v) edge[bend right=20] (z)
(v) edge[bend left=20] (x)
(v) edge[bend right=20] (x)
(x) edge[bend left=20] (y)
(x) edge[bend right=20] (y)
(x) edge[bend left=20] (z)
(x) edge[bend right=20] (z)
(y) edge[bend left=20] (z)
(y) edge[bend right=20] (z);
\end{tikzpicture}}
\caption{The graph $G_{m,n}^{a,b}$ and a particular subset $C_v$ of vertices.}
\label{GmnabPicture}
\end{figure}
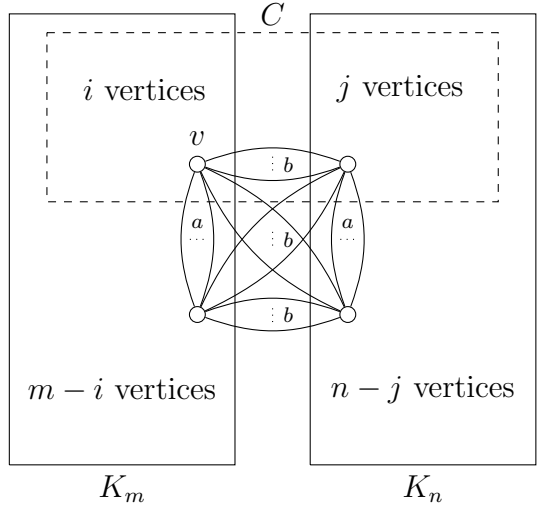

There are $\tbinom{m-1}{i-1}\tbinom{n}{j}$ distinct sets of this form as we may choose any $i-1$ vertices from the remaining $m-1$ vertices of $K_m$ and any $j$ vertices from the $n$ vertices of $K_n.$  Thus the probability that $v$ lies in some connected component containing $i$ vertices from $K_m$ and $j$ vertices from $K_n$ is 
\[
\tbinom{m-1}{i-1}\tbinom{n}{j}q^{a[i(m-i)+j(n-j)]+b[i(n-j)+j(m-i)]}\atrel\left(G_{i,j}^{a,b};p\right)
\]
If we sum over all possibilities for $i$ and $j$ we obtain $1$ as $v$ must be in some component.  This gives (\ref{GmnabEquation}).
\end{proof}

Proposition \ref{GmnabRecursion} gives a recursion for $\atrel\left(G_{m,n}^{a,b};q\right)$ in terms of the smaller polynomials $\atrel\left(G_{i,j}^{a,b};q\right)$ for all $0<i\leq m$ and $0\leq j\leq n$ with $i+j<m+n$, with the base case $\atrel\left(G_{1,0}^{a,b};q\right)=1$ given by the same equation.  This allows us to compute $\atrel\left(G_{m,n}^{a,b};q\right)$ efficiently for small values of $m$ and $n.$  

We numerically computed the ATR roots of the graphs $G_{m,n}^{a,b}$ for all small $m,$ $n,$ $a,$ and $b,$ and graphs of the form $G_{n,n}^{1,6}$ yielded the roots of largest modulus.  While the ATR roots of the graphs $G_{1,1}^{1,6}$ and $G_{2,2}^{1,6}$ all lie inside the unit disk, for each $n\in\{3,4,\hdots,12\}$ the graph $G_{n,n}^{1,6}$ has ATR roots outside of the unit disk.  Table \ref{16Table} shows the ATR roots of $G_{n,n}^{1,6}$ of greatest modulus for $n\in\{3,4,\hdots,12\},$ and the modulus is increasing with $n$ for the values shown.  The ATR roots of largest modulus of $G_{12,12}^{1,6}$ have modulus almost three times as far outside of the unit disk as the best examples from \cite{BCFalse} -- namely, those for the graph $G_{3,3}^{1,6}.$

\begin{table}
\centering
\begin{tabular}{ccc}
$n$ & ATR roots of $G_{n,n}^{1,6}$ of greatest modulus & Modulus\\\hline
3 & $0.6965978094\pm 0.7739344775i$ & $1.0412603341$\\
4 & $0.7225077023\pm 0.7873461471i$ & $1.0686118731$\\
5 & $0.7415248258\pm 0.7932060873i$ & $1.0858337645$\\
6 & $0.7557913447\pm 0.7946437701i$ & $1.0966673507$\\
7 & $0.7665525647\pm 0.7937722633i$ & $1.1034841369$\\
8 & $0.7747703944\pm 0.7917743649i$ & $1.1077796753$\\
9 & $0.7811493576\pm 0.7892664429i$ & $1.1104664951$\\
10 & $0.7861847934\pm 0.7865650322i$ & $1.1121020993$\\
11 & $0.7902223368\pm 0.7838329136i$ & $1.1130343112$\\
12 & $0.7935054014\pm 0.7811532818i$ & $1.1134860896$
\end{tabular}
\caption[ATR roots of $G_{n,n}^{1,6}$ of greatest modulus for small $n.$]{ATR roots of $G_{n,n}^{1,6}$ of greatest modulus for small $n.$  All values rounded to 10 decimal places.}
\label{16Table}
\end{table}

\subsection{Simple graphs with ATR roots outside of the unit disk}\label{SimpleGraphSection}

Of course, for $n\geq 2$ the graphs $G_{n,n}^{1,6}$ discussed in the previous section contain multiple edges.  In \cite{BCFalse}, several \textit{simple} graphs were found that still violated the Brown-Colbourn Conjecture -- the smallest example being the graph on $1512$ vertices and $3016$ edges obtained from $G_{2,2}^{11,1}$ by replacing every edge with $58$ edges in parallel, and then replacing every edge with two edges in series (i.e.\ with a path of length $2$).  The ATR roots of this graph were obtained from the ATR roots of $G_{2,2}^{11,1}$ by transforming to a related generating polynomial and using reduction formulae for the series and parallel edge replacements.  All of the simple graphs in \cite{BCFalse} with ATR roots outside of the unit disk were constructed in a similar manner, and thus they all have edge connectivity $2$ (in fact, they have many vertices of degree $2$).  We improve on these results in two ways: we find a simple graph of smaller order and size that has ATR roots outside of the unit disk, and we find simple graphs with higher edge connectivity that have ATR roots outside of the unit disk.

In order to generate examples of simple graphs with higher edge connectivity that have ATR roots outside of the unit disk, we discuss a more general substitution operation on graphs.  We generalize the idea from \cite{BCFalse} of replacing every edge in a graph with either $k$ edges in parallel or $k$ edges in series.  Essentially, our substitution operation involves replacing every edge in a given graph by \textit{any} fixed graph of our choice.

We define a \textit{gadget} $H(u,v)$ to be a connected graph $H$ of order at least $2$ together with special vertices $u$ and $v$ of $H$ with $u\neq v.$  Let $G$ be a graph and let $H(u,v)$ be a gadget.  An \textit{edge substitution} of the gadget $H(u,v)$ into $G,$ denoted $G[H(u,v)],$ is any graph formed by replacing each edge $\{x,y\}\in E(G)$ by a copy $H_{\{x,y\}}$ of $H,$ identifying $u$ with $x$ and $v$ with $y.$  Note that in order to obtain a specific edge substitution we need to fix an orientation of $G,$ but our results here do not depend on the orientation of $G.$  We let $G[H(u,v)]$ denote \textit{any} edge substitution of the gadget $H(u,v)$ into $G.$

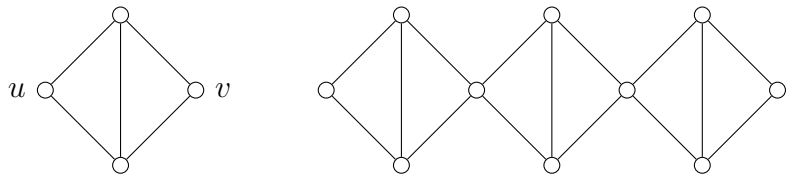
\begin{figure}
\centering{
\begin{minipage}{0.3\textwidth}
\begin{tikzpicture}
\vertex (11) at (1,1) {};
\vertex (00) at (0,0) [label=left:$u$]{};
\vertex (1-1) at (1,-1) {};
\vertex (20) at (2,0) [label=right:$v$]{};
\path
(00) edge (11)
(00) edge (1-1)
(11) edge (1-1)
(11) edge (20)
(1-1) edge (20);
\end{tikzpicture}
\end{minipage}
\begin{minipage}{0.5\textwidth}
\begin{tikzpicture}
\vertex (11) at (1,1) {};
\vertex (00) at (0,0) {};
\vertex (1-1) at (1,-1) {};
\vertex (20) at (2,0) {};
\vertex (40) at (4,0) {};
\vertex (60) at (6,0) {};
\vertex (31) at (3,1) {};
\vertex (3-1) at (3,-1) {};
\vertex (51) at (5,1) {};
\vertex (5-1) at (5,-1) {};
\path
(00) edge (11)
(00) edge (1-1)
(11) edge (1-1)
(11) edge (20)
(1-1) edge (20)
(20) edge (31)
(20) edge (3-1)
(31) edge (3-1)
(31) edge (40)
(3-1) edge (40)
(40) edge (51)
(40) edge (5-1)
(51) edge (5-1)
(51) edge (60)
(5-1) edge (60);
\end{tikzpicture}
\end{minipage}
}
\caption{A gadget $D(u,v)$ and the edge substitution $P_4[D(u,v)]$.}
\label{P4Diamond}
\end{figure}

We will present an expression for the all-terminal reliability of any edge substitution $G[H(u,v)]$ in terms of reliability polynomials of $G$ and $H.$ However, we will need more than just the all-terminal reliability of $H.$  We introduce a new reliability polynomial which we term the \textit{$\{u,v\}$-split reliability}.

\begin{definition}
Let $G$ be a connected graph in which each edge fails independently with probability $q,$ and let $\{u,v\}\subseteq V(G)$ where $u\neq v.$  The \textit{$\{u,v\}$-split reliability} of $G,$ denoted $\splitRel_{\{u,v\}}(G;q)$ is the probability that every vertex $w$ in $G$ can communicate with \textit{exactly} one vertex from $\{u,v\}$ (i.e.\ every vertex in $G$ can communicate with either $u$ or $v$ but not both).
\end{definition}

For example, consider the complete graph $K_4$ on $4$ vertices and let $u$ and $v$ be vertices of $K_4.$  All $10$ operational states for the $\{u,v\}$-split reliability of $K_4$ are pictured in Figure \ref{K4SplitRelStates} -- there are $8$ states with two operational edges and $2$ states with three operational edges.  Hence, the $\{u,v\}$-split reliability of $K_4$ is given by
\[
\splitRel_{\{u,v\}}(K_4;q)=8(1-q)^2q^4+2(1-q)^3q^3.
\]

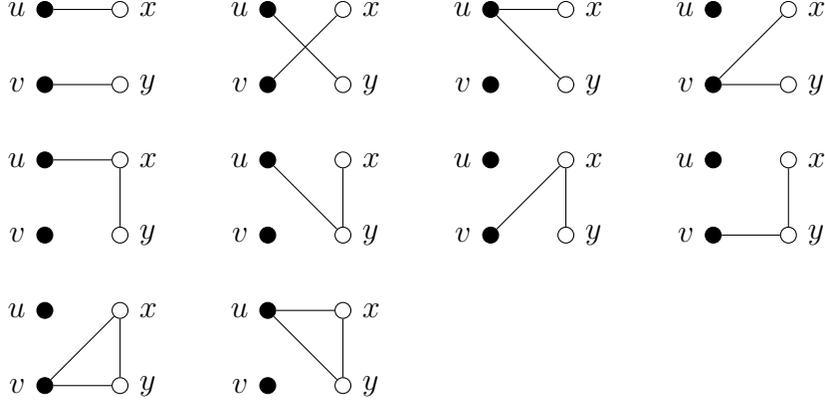
\begin{figure}
\[\arraycolsep=10pt\begin{array}{c c c c}
\begin{tikzpicture}
\fillvertex (u) at (0,1) [label=left:$u$]{};
\fillvertex (v) at (0,0) [label=left:$v$]{};
\vertex (x) at (1,1) [label=right:$x$]{};
\vertex (y) at (1,0) [label=right:$y$]{};
\path
(u) edge (x)
(v) edge (y);
\end{tikzpicture} &
\begin{tikzpicture}
\fillvertex (u) at (0,1) [label=left:$u$]{};
\fillvertex (v) at (0,0) [label=left:$v$]{};
\vertex (x) at (1,1) [label=right:$x$]{};
\vertex (y) at (1,0) [label=right:$y$]{};
\path
(u) edge (y)
(v) edge (x);
\end{tikzpicture} &
\begin{tikzpicture}
\fillvertex (u) at (0,1) [label=left:$u$]{};
\fillvertex (v) at (0,0) [label=left:$v$]{};
\vertex (x) at (1,1) [label=right:$x$]{};
\vertex (y) at (1,0) [label=right:$y$]{};
\path 
(u) edge (x)
(u) edge (y);
\end{tikzpicture} &
\begin{tikzpicture}
\fillvertex (u) at (0,1) [label=left:$u$]{};
\fillvertex (v) at (0,0) [label=left:$v$]{};
\vertex (x) at (1,1) [label=right:$x$]{};
\vertex (y) at (1,0) [label=right:$y$]{};
\path
(v) edge (x)
(v) edge (y);
\end{tikzpicture} \\[10pt]
\begin{tikzpicture}
\fillvertex (u) at (0,1) [label=left:$u$]{};
\fillvertex (v) at (0,0) [label=left:$v$]{};
\vertex (x) at (1,1) [label=right:$x$]{};
\vertex (y) at (1,0) [label=right:$y$]{};
\path
(u) edge (x)
(x) edge (y);
\end{tikzpicture} &
\begin{tikzpicture}
\fillvertex (u) at (0,1) [label=left:$u$]{};
\fillvertex (v) at (0,0) [label=left:$v$]{};
\vertex (x) at (1,1) [label=right:$x$]{};
\vertex (y) at (1,0) [label=right:$y$]{};
\path
(u) edge (y)
(y) edge (x);
\end{tikzpicture} &
\begin{tikzpicture}
\fillvertex (u) at (0,1) [label=left:$u$]{};
\fillvertex (v) at (0,0) [label=left:$v$]{};
\vertex (x) at (1,1) [label=right:$x$]{};
\vertex (y) at (1,0) [label=right:$y$]{};
\path 
(v) edge (x)
(x) edge (y);
\end{tikzpicture} &
\begin{tikzpicture}
\fillvertex (u) at (0,1) [label=left:$u$]{};
\fillvertex (v) at (0,0) [label=left:$v$]{};
\vertex (x) at (1,1) [label=right:$x$]{};
\vertex (y) at (1,0) [label=right:$y$]{};
\path
(v) edge (y)
(x) edge (y);
\end{tikzpicture} \\[10pt]
\begin{tikzpicture}
\fillvertex (u) at (0,1) [label=left:$u$]{};
\fillvertex (v) at (0,0) [label=left:$v$]{};
\vertex (x) at (1,1) [label=right:$x$]{};
\vertex (y) at (1,0) [label=right:$y$]{};
\path
(v) edge (y)
(v) edge (x)
(x) edge (y);
\end{tikzpicture} &
\begin{tikzpicture}
\fillvertex (u) at (0,1) [label=left:$u$]{};
\fillvertex (v) at (0,0) [label=left:$v$]{};
\vertex (x) at (1,1) [label=right:$x$]{};
\vertex (y) at (1,0) [label=right:$y$]{};
\path
(u) edge (y)
(u) edge (x)
(x) edge (y);
\end{tikzpicture}
\end{array}
\]
\caption{The operational states for the $\{u,v\}$-split reliability of $K_4.$}
\label{K4SplitRelStates}
\end{figure}

Now, we can present an expression for the all-terminal reliability of the graph $G[H(u,v)].$  The key is to notice that the \textit{internal} vertices of the gadget (that is, the vertices of $H$ apart from $u$ and $v$) can only communicate with the rest of the graph $G$ through $u$ and $v.$  Thus, in any operational state of $G[H(u,v)]$ each individual copy of the gadget must either be connected, or split between $u$ and $v.$

\begin{proposition}\label{EdgeSubstitution}
Let $G$ be a graph on $n$ vertices and $m$ edges and let $H(u,v)$ be a gadget.  The all-terminal reliability of the graph $G[H(u,v)]$ is given by
\begin{align}
\atrel\left(G[H(u,v)];q\right)=\left[\atrel(H;q)\right]^mF\left(G;\tfrac{\splitRel_{\{u,v\}}(H;q)}{\atrel(H;q)}\right),\label{EdgeSubForm}
\end{align}
where $F(G;z)$ is the $F$-polynomial of $G$.
\end{proposition}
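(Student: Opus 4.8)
The plan is to analyse an arbitrary operational state of $G[H(u,v)]$ one gadget-copy at a time and then to recognize the resulting sum as an evaluation of the $F$-polynomial of $G$. Fix the orientation of $G$ used to build $G[H(u,v)]$, and for each edge $e=\{x_e,y_e\}$ of $G$ let $H_e$ denote the corresponding copy of $H$ in $G[H(u,v)]$, with $u$ identified to $x_e$ and $v$ to $y_e$. The edge set of $G[H(u,v)]$ is the disjoint union of the sets $E(H_e)$, so any state $S$ (set of operational edges) decomposes as a disjoint union of its restrictions $S_e=S\cap E(H_e)$, and because edges fail independently the probability of $S$ is the product over $e$ of the probability of $S_e$ inside $H_e$.

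The heart of the argument is a characterization of the operational states of $G[H(u,v)]$ (those $S$ whose operational edges span $G[H(u,v)]$): $S$ is operational if and only if (i) for every $e$, either the operational edges $S_e$ connect $H_e$, or they realize a $\{u,v\}$-split of $H_e$ — and these two possibilities are mutually exclusive; and (ii) the graph $G-A$ is connected, where $A=A(S):=\{e\in E(G): S_e \text{ is a split state}\}$. As noted before the statement, an internal vertex of $H_e$ can reach the rest of $G[H(u,v)]$ only through $x_e$ or $y_e$; hence, for all internal vertices of $H_e$ to lie in the same component of $S$ as the vertices of $G$, the $S_e$-components of $x_e$ and $y_e$ must together cover $V(H_e)$, which says exactly that $S_e$ connects $H_e$ or splits it between $u$ and $v$. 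Granting (i), a walk of $S$ between two vertices of $G$ projects to a walk of $G$ using only the edges $e\notin A$ (one cannot traverse $H_e$ from $x_e$ to $y_e$ when $S_e$ is a split state), and conversely every walk of $G-A$ lifts; thus the partition of $V(G)$ into $S$-components coincides with its partition into components of $G-A$, and (i) then accounts for the internal vertices. Proving both directions of this characterization carefully, and in particular verifying the mutual exclusivity that makes the grouping by $A$ a genuine partition of the operational states, is the step I expect to demand the most attention.

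With the characterization in hand the computation is routine. Summing the probability weight over all operational states and grouping by the set $A$ — which ranges over all $A\subseteq E(G)$ with $G-A$ connected — the product structure lets the sum factor, and since every $H_e$ is a copy of $H$ we obtain
\[
\atrel\!\left(G[H(u,v)];q\right)=\sum_{\substack{A\subseteq E(G)\\ G-A\text{ connected}}}\splitRel_{\{u,v\}}(H;q)^{|A|}\,\atrel(H;q)^{m-|A|}=\sum_{i}F_i(G)\,\splitRel_{\{u,v\}}(H;q)^{i}\,\atrel(H;q)^{m-i},
\]
the last equality being the definition of $F_i(G)$ as the number of $i$-element subsets of $E(G)$ whose removal leaves $G$ connected. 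Factoring out $\atrel(H;q)^m$ turns this into $\atrel(H;q)^m\,F\!\left(G;\splitRel_{\{u,v\}}(H;q)/\atrel(H;q)\right)$, which is the claimed identity. Finally, since $\deg F(G;x)=m-n+1\le m$, the right-hand side of (\ref{EdgeSubForm}) is an honest polynomial in $q$, so the identity — established for every $q$ with $\atrel(H;q)\ne 0$, hence for all but finitely many $q$ — holds identically.
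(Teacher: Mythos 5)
Your proposal is correct and follows essentially the same route as the paper: decompose each operational state of $G[H(u,v)]$ gadget-by-gadget, observe each copy must be either connected or $\{u,v\}$-split, and identify the split gadgets with failed edges of $G$ so the sum becomes $\sum_i F_i\,[\atrel(H;q)]^{m-i}[\splitRel_{\{u,v\}}(H;q)]^i$. Your extra care about mutual exclusivity and about the division by $\atrel(H;q)$ only makes explicit details the paper leaves implicit.
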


\begin{proof}
Consider any copy of the gadget in any operational state of $G[H(u,v)].$  There are only two possibilities for the gadget if $G[H(u,v)]$ is to be operational: 
\begin{enumerate}[(i)]
\item All of the vertices in the gadget can communicate with one another.  This occurs with probability $\atrel(H;q).$  We say that the gadget is \textit{operational} in this case.
\item Each vertex in the gadget can communicate with exactly one of the vertices $u$ or $v.$  This occurs with probability $\splitRel_{\{u,v\}}(H;q).$  We say that the gadget \textit{splits} in this case.
\end{enumerate}
A gadget splitting in $G[H(u,v)]$ corresponds to an edge failing in $G,$ while an operational gadget in $G[H(u,v)]$ corresponds to an operational edge in $G.$  For any state $\phi$ of $G[H(u,v)],$ let 
\[
E_\phi=\{e\in E(G)\colon\  \mbox{the corresponding gadget $H_e$ is operational}\}.
\]
The state $\phi$ is operational if and only if $E_\phi$ induces a connected subgraph of $G$ and all gadgets corresponding to edges not in $E_\phi$ split.  Thus, the operational states of $G[H(u,v)]$ in which $m-i$ gadgets are operational and $i$ gadgets split correspond exactly to the operational states of $G$ in which $m-i$ edges are operational and $i$ edges fail, and the latter are counted by $F_i$ (from the $F$-vector of the cographic matroid of $G$).  Thus the all-terminal reliability of any edge substitution $G[H(u,v)]$ is given by
\[
\atrel\left(G[H(u,v)];q\right)=\sum_{i=0}^{m-n+1}F_i\left[\atrel(H;q)\right]^{m-i}\left[\splitRel_{\{u,v\}}(H;q)\right]^i,
\]
which can be rewritten as (\ref{EdgeSubForm}) by factoring $[\atrel(H;q)]^m$ out of the sum.
\end{proof}

The expression for $\atrel(G[H(u,v)];q)$ given in Proposition \ref{EdgeSubstitution} allows us to find ATR roots of $G[H(u,v)]$ by a two-step process.  We first find a root $r$ of $\atrel(G;q)$ and then solve a second equation that involves the all-terminal reliability of $H,$ the $\{u,v\}$-split reliability of $H,$ and the root $r.$    

\begin{corollary}\label{EdgeSubstitutionCor}
Let $G$ be a connected graph and let $H(u,v)$ be a gadget.  If $r \neq 1$ is an ATR root of $G,$ then any solution of the equation
\begin{align}\label{SplitSolve}
\splitRel_{\{u,v\}}(H;q)=\tfrac{r}{1-r}\cdot \atrel(H;q)
\end{align}
is either an ATR root of $G[H(u,v)]$ or an ATR root of $H$ itself.
\end{corollary}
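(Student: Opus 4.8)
The plan is to read the corollary straight off the factored formula in Proposition~\ref{EdgeSubstitution}, using only the elementary dictionary between the $F$-polynomial and the all-terminal reliability polynomial. The first step is to record that dictionary: from $\atrel(G;q)=\sum_i F_i q^i(1-q)^{m-i}=(1-q)^m F\!\left(G;\tfrac{q}{1-q}\right)$ one sees that for $r\neq 1$ we have $\atrel(G;r)=0$ if and only if $(1-r)^m F\!\left(G;\tfrac{r}{1-r}\right)=0$, and since $(1-r)^m\neq 0$ this is equivalent to $F\!\left(G;\tfrac{r}{1-r}\right)=0$. So the hypothesis that $r\neq 1$ is an ATR root of $G$ is exactly the assertion $F\!\left(G;\tfrac{r}{1-r}\right)=0$.

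The second step is a short case analysis on a solution $s$ of (\ref{SplitSolve}). If $\atrel(H;s)=0$, then $s$ is by definition an ATR root of $H$ and there is nothing more to prove (one could note that (\ref{SplitSolve}) then also forces $\splitRel_{\{u,v\}}(H;s)=0$, but this is not needed). If instead $\atrel(H;s)\neq 0$, then (\ref{SplitSolve}) rearranges to $\splitRel_{\{u,v\}}(H;s)/\atrel(H;s)=\tfrac{r}{1-r}$, and at $q=s$ the formula (\ref{EdgeSubForm}) of Proposition~\ref{EdgeSubstitution} is a valid equality (the division being legitimate). Substituting gives $\atrel\!\left(G[H(u,v)];s\right)=[\atrel(H;s)]^m\, F\!\left(G;\tfrac{r}{1-r}\right)=[\atrel(H;s)]^m\cdot 0=0$, so $s$ is an ATR root of $G[H(u,v)]$.

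There is no genuinely hard step here; the only point requiring care is that (\ref{EdgeSubForm}) contains the rational function $\splitRel_{\{u,v\}}(H;q)/\atrel(H;q)$ inside $F(G;\cdot)$, so what holds everywhere is really the polynomial identity $\atrel(G[H(u,v)];q)=\sum_i F_i[\atrel(H;q)]^{m-i}[\splitRel_{\{u,v\}}(H;q)]^i$, with (\ref{EdgeSubForm}) valid pointwise only where $\atrel(H;q)\neq 0$. That is precisely why splitting on whether $\atrel(H;s)$ vanishes is the natural way to organize the argument, and it is what keeps the proof honest.
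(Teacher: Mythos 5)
Your proposal is correct and takes essentially the same route as the paper's proof: translate the hypothesis that $r\neq 1$ is an ATR root of $G$ into $F\left(G;\tfrac{r}{1-r}\right)=0$ via $\atrel(G;q)=(1-q)^mF\left(G;\tfrac{q}{1-q}\right)$, then split on whether $\atrel(H;s)$ vanishes and, in the nonvanishing case, substitute into Proposition~\ref{EdgeSubstitution}. Your closing remark that the factored form (\ref{EdgeSubForm}) is pointwise legitimate only where $\atrel(H;q)\neq 0$ is a slightly more careful articulation of the same division step the paper performs, not a different argument.
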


\begin{proof}
Let $r\neq 1$ be a root of $\atrel(G;q).$  Since we can write
\[
\atrel(G;q)=(1-q)^mF\left(G;\tfrac{q}{1-q}\right),
\]
$r$ corresponds to the root $\tfrac{r}{1-r}$ of $F(G;x).$  By Proposition \ref{EdgeSubstitution}, 
\begin{align*}
\atrel(G[H(u,v)];q)=\left[\atrel(H;q)\right]^mF\left(G;\tfrac{\splitRel_{\{u,v\}}(H;q)}{\atrel(H;q)}\right)
\end{align*}
for any gadget $H(u,v).$  Therefore, any solution of the equation 
\begin{align}
\frac{\splitRel_{\{u,v\}}(H;q)}{\atrel(H;q)}=\frac{r}{1-r}\label{rational}
\end{align}
is a root of $\atrel(G[H(u,v)];q).$  Suppose now that $z$ is a solution of the equation 
\begin{align}
\splitRel_{\{u,v\}}(H;q)=\frac{r}{1-r}\atrel(H;q).\label{polynomial}
\end{align}
Then either $\atrel(H;z)=0$ or we can divide both sides of (\ref{polynomial}) by $\atrel(H;q)$ to obtain a solution of (\ref{rational}), which implies that $\atrel(G[H(u,v)];z)=0.$
\end{proof}

Using Corollary \ref{EdgeSubstitutionCor} we can find ATR roots of edge substitution graphs by first finding an ATR root $r$ of $G$ and then solving (\ref{SplitSolve}).  Any solutions of (\ref{SplitSolve}) that are not roots of $\atrel(H;q)$ must be ATR roots of $G[H(u,v)].$  An inherent problem with this technique is that we can only solve for the ATR roots of a graph $G$ \textit{exactly} in special cases.  For many graphs we can only \textit{approximate} the ATR roots.  While we can obtain very precise approximations to an ATR root $r$ using numerical methods, we then must solve (\ref{SplitSolve}), and it is well known that the location of the roots of a polynomial can be very sensitive to small changes in the coefficients.  Wilkinson's Polynomial \cite{Wilkinson} is a classic example of this phenomenon.

To get around this problem, instead of solving (\ref{SplitSolve}) numerically, we use a particular stability test due to Schur and Cohn (see \cite{StabilityTest}, Section 11.5) to show that (\ref{SplitSolve}) has solutions outside of the unit disk for all values close to $r,$ so that our numerical approximation to the ATR root $r$ will be sufficient.  The statement of the key result below requires some new notation.  The \textit{complex conjugate} of $a\in\mathbb{C}$ is denoted $\overline{a}$ and the \textit{conjugate transpose} of a complex matrix $A$ is denoted $A^*$ (this is the operation of taking the conjugate of every entry of $A$ and then transposing).  Finally, for a finite sequence $a_1,\hdots, a_n$ of nonzero real numbers, $\mathrm{B}(a_1,\hdots,a_n)$ denotes the number of sign changes in the sequence (i.e.\ the number of indices $k\in\{2,\hdots,n\}$ for which $a_{k-1}a_k<0$).  For example, $\mathrm{B}(-1,1,2,-4,-2)=2.$

\begin{theorem}[Schur-Cohn, \cite{StabilityTest}, Cor.\ 11.5.14] \label{SchurCohnTest}
Let $f(z)=\displaystyle\sum_{k=0}^n a_kz^k$ be a polynomial of degree $n.$  Define the upper triangular matrices
\[
A_k=\begin{bmatrix}
a_0 & a_1 & \hdots & a_{k-1}\\
 & a_0 & \hdots & a_{k-2}\\
 &  & \ddots & \vdots\\
 &  &  & a_0
\end{bmatrix}\ \ \ 
\mbox{ and } \ \ \ 
B_k=\begin{bmatrix}
\overline{a}_n & \overline{a}_{n-1} & \hdots & \overline{a}_{n-k+1}\\
 & \overline{a}_n & \hdots & \overline{a}_{n-k+2}\\
 &  & \ddots & \vdots\\
 &  &  & \overline{a}_n
\end{bmatrix},
\]
where the zero entries have been left blank.  Suppose that for $k\in\{1,\hdots,n\},$ the determinants
\[
M_k=\begin{vmatrix}
B_k^* & A_k\\ A_k^* & B_k
\end{vmatrix}
\]
are all different from zero.  Then $f$ has no root on the unit circle, $\beta=\mathrm{B}(1,M_1,M_2,\hdots,M_n)$ roots outside of the unit circle, and $\alpha=n-\beta$ roots inside it.  \hfill \qed
\end{theorem}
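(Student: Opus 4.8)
The statement is the classical Schur--Cohn stability criterion, so the plan is to reconstruct its standard proof from the theory of Hermitian forms attached to $f$, rather than anything particular to reliability. Write $f(z)=\sum_{k=0}^n a_kz^k$ with $a_n\neq 0$, and introduce the \emph{reversed polynomial} $\tilde f(z)=z^n\overline{f(1/\bar z)}=\sum_{k=0}^n\overline{a_{n-k}}\,z^k$, whose zeros are exactly the reflections $1/\bar\zeta$ of the zeros $\zeta$ of $f$ across the unit circle; in particular, $f$ and $\tilde f$ have a common zero if and only if $f$ has a zero on the circle. The upper-triangular Toeplitz matrices $A_n$ and $B_n$ of the theorem are precisely the coefficient matrices of $f$ and of $\tilde f$ with their leading terms deleted, and from them one builds the $n\times n$ Hermitian \emph{Schur--Cohn matrix} $\mathcal S$ of $f$ (in a standard normalization, a suitable Hermitian combination of $A_nA_n^{\,*}$ and $B_n^{\,*}B_n$, equivalently the Bézoutian of the pair $f,\tilde f$). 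The first ingredient I would invoke is the \emph{inertia theorem}: with the correct sign convention, the number of zeros of $f$ strictly outside the unit circle equals the number of negative eigenvalues of $\mathcal S$, the number strictly inside equals the number of positive eigenvalues, and the number on the circle equals $\dim\ker\mathcal S$. This is the unit-circle analogue of the Hermite--Fujiwara theorem; I would prove it either by a Cauchy-index/partial-fraction computation for $\tilde f/f$ around $|z|=1$, or by induction on $n$ via the Schur transformation $f\mapsto(\overline{a_n}f-a_0\tilde f)/z$, which lowers the degree by one and relates the interior-zero count of $f$ to that of the reduced polynomial through the sign of $|a_n|^2-|a_0|^2$.

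The second ingredient identifies the block determinants $M_k$ with the leading principal minors of $\mathcal S$. Since $a_n\neq 0$, the triangular block $B_k$ is invertible ($\det B_k=\overline{a_n}^{\,k}$), so one can reduce the $2k\times 2k$ determinant defining $M_k$ by a Schur-complement step; combining this with the (directly verified) fact that each $M_k$ is real and with the Toeplitz structure of $A_k,B_k$, a computation going back to Schur and Cohn yields $M_k=c_k\det\mathcal S_k$ for a positive constant $c_k$, where $\mathcal S_k$ is the $k\times k$ leading principal submatrix of $\mathcal S$. In particular, the hypothesis that all $M_k$ are nonzero is exactly the statement that every leading principal minor of $\mathcal S$ is nonzero, whence $\mathcal S$ is nonsingular, $\dim\ker\mathcal S=0$, and $f$ has no zero on the unit circle.

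The last step is pure linear algebra: Jacobi's signature rule (Sylvester's law of inertia applied to the $LDL^{\,*}$ factorization) says that a Hermitian matrix with nonzero leading principal minors $1=\Delta_0,\Delta_1,\dots,\Delta_n$ has exactly $\mathrm{B}(1,\Delta_1,\dots,\Delta_n)$ negative eigenvalues and $n-\mathrm{B}(1,\Delta_1,\dots,\Delta_n)$ positive ones. Since $M_k=c_k\Delta_k$ with $c_k>0$, we have $\mathrm{B}(1,M_1,\dots,M_n)=\mathrm{B}(1,\Delta_1,\dots,\Delta_n)$, and feeding this into the inertia theorem gives $\beta=\mathrm{B}(1,M_1,\dots,M_n)$ zeros of $f$ outside the unit circle and $\alpha=n-\beta$ inside it, as claimed.

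The genuinely delicate point is the bookkeeping in the middle paragraph: one must fix the normalization of $\mathcal S$ so that \emph{simultaneously} (a) the inertia theorem comes out with ``negative eigenvalue $\leftrightarrow$ zero outside the circle'' in the orientation asserted here, and (b) the Schur-complement reduction of the $2k\times 2k$ determinant produces $\det\mathcal S_k$ times a constant that is positive for \emph{every} $k$, so that no spurious sign changes are introduced into the sequence $(1,M_1,\dots,M_n)$. Keeping the conjugations, transposes and signs mutually consistent across the inertia theorem, the determinant identity, and Jacobi's rule is where essentially all of the effort goes; the inertia theorem and the signature rule are themselves standard once $\mathcal S$ has been pinned down.
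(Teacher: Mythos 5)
First, a point of comparison: the paper does not prove this theorem at all --- it is imported verbatim from Rahman and Schmeisser (\cite{StabilityTest}, Cor.~11.5.14) and used as a black box, so there is no internal argument to measure you against; your sketch reconstructs essentially the route taken in that cited source. The architecture is right, and the ``delicate bookkeeping'' you worry about does close up cleanly with the normalization $\mathcal{S}=B_n^*B_n-A_n^*A_n$: because $B_n$ and $A_n$ are upper triangular Toeplitz, the $(i,j)$ entries of $B_n^*B_n$ and $A_n^*A_n$ with $i,j\leq k$ involve only the leading $k\times k$ blocks, so the $k$th leading principal submatrix of $\mathcal{S}$ is exactly $B_k^*B_k-A_k^*A_k$; and the Schur complement of the invertible block $B_k^*$ (using $\det B_k^*=a_n^k\neq 0$ and the fact that lower triangular Toeplitz matrices commute, so $B_k^*A_k^*(B_k^*)^{-1}=A_k^*$) gives $M_k=\det(B_k^*B_k-A_k^*A_k)$ on the nose, i.e.\ your constants $c_k$ are all equal to $1$. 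The orientation also checks out against the paper's own use of the test: for $n=1$ this normalization gives $M_1=|a_1|^2-|a_0|^2$, which reproduces the value $M_1=4a+4$ computed in Proposition 3.6, and a negative eigenvalue corresponds to a root outside the circle, as you assert. Granting the two standard ingredients you defer to (the Hermite-type inertia theorem for the unit circle and Jacobi's sign-change rule), the plan is sound.

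One intermediate assertion is false as stated and should be repaired, though it does not sink the argument. You claim that the number of zeros of $f$ on the unit circle equals $\dim\ker\mathcal{S}$. In fact $\dim\ker\mathcal{S}$ equals the number of common zeros of $f$ and $\tilde f$ (with multiplicity), and these include not only circle zeros but also reciprocal pairs $\zeta,\,1/\overline{\zeta}$ of zeros of $f$ lying off the circle: for example $f(z)=z^2-\tfrac{5}{2}z+1=(z-2)(z-\tfrac12)$ satisfies $\tilde f=f$, so $A_2=B_2$ and $\mathcal{S}=0$ is maximally singular, yet $f$ has no zero on the circle. What is true, and is all your proof needs, is the one-way implication: a zero of $f$ on the unit circle is a common zero of $f$ and $\tilde f$ and hence forces $\mathcal{S}$ to be singular. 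Since the hypothesis that all $M_k$ are nonzero makes every leading principal minor of $\mathcal{S}$ nonzero, $\mathcal{S}$ is nonsingular, which simultaneously rules out circle zeros and reciprocal pairs; the inertia theorem in the nonsingular case then gives the inside/outside counts, and Jacobi's rule converts the negative-eigenvalue count into $\mathrm{B}(1,M_1,\dots,M_n)$, yielding the stated conclusion.
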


We now build up the particular examples.  One of our goals was to find simple graphs with high edge connectivity that have ATR roots outside of the unit disk, so we will need gadgets with high edge connectivity.  While the complete graph is an obvious candidate, we have found that using the complete graph minus an edge is more effective.  For each $n\geq 3,$ let $K_n^-$ denote the graph obtained from $K_n$ by deleting an edge, and let $K_n^-(u,v)$ denote the gadget in which $u$ and $v$ are nonadjacent in $K_n^-$.  Clearly $K_n^-$ is $(n-2)$-edge-connected.  The following result is straightforward.

\begin{lemma}
Let $G$ be a $2$-edge-connected graph.  For any $n\geq 3,$ the graph $G[K_n^-(u,v)]$ is $(n-1)$-edge-connected. \hfill \qed
\end{lemma}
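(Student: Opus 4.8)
The plan is to show that $G[K_n^-(u,v)]$ is $(n-1)$-edge-connected by a direct argument on edge cuts, using the structure imposed by the gadget substitution. First I would set up notation: fix an orientation of $G$ so that $G[K_n^-(u,v)]$ is a specific graph, and for each edge $e=\{x,y\}\in E(G)$ let $H_e$ denote the copy of $K_n^-$ substituted for $e$, with $u$ identified to $x$ and $v$ identified to $y$ (the "gluing" vertices). The internal vertices of $H_e$ (those other than $x$ and $y$) have all their incident edges inside $H_e$. The two facts I would isolate up front are: (a) within a single copy $H_e$, the minimum size of an edge cut separating $u$ from $v$ is $n-1$ (since $K_n^-$ with $u,v$ nonadjacent has $\delta(u)=\delta(v)=n-2$, but one checks the global $u$--$v$ edge connectivity in $K_n^-$ is actually $n-1$ — wait, $\deg(u)=n-2$, so in fact $\lambda_{K_n^-}(u,v)=n-2$; I would recheck this), and (b) any edge cut of $H_e$ that isolates a nonempty set of internal vertices from $\{u,v\}$ together has size at least $n-1$, since internal vertices have degree $n-1$ in $K_n^-$.

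The core of the argument: let $C$ be a minimum edge cut of $\widehat G:=G[K_n^-(u,v)]$, say separating vertex set $S$ from its complement. I want to show $|C|\geq n-1$. For each edge $e$ of $G$, look at how $C$ interacts with the copy $H_e$: the cut $C$ restricted to $H_e$ is an edge cut of $H_e$ separating $S\cap V(H_e)$ from the rest of $V(H_e)$. I would split into cases according to which of the two gluing vertices of $H_e$ lie in $S$. If $C\cap E(H_e)=\emptyset$, then within $H_e$ the vertex set $S$ behaves "like an edge" of $G$ — either both endpoints are in $S$ or neither is, and all internal vertices of $H_e$ follow suit. Hence the set $E_S=\{e\in E(G): H_e\text{ contributes no edge to }C\text{ and both endpoints lie on the }S\text{-side}\}$ induces a subgraph, and the edges of $G$ whose copies do meet $C$ form an edge cut of $G$ (once we pass to the quotient where each intact $H_e$ is contracted back to an edge), unless $S$ (or its complement) lies entirely inside a single copy $H_e$. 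In the latter degenerate case, $|C|\geq n-1$ by fact (b) above (isolating internal vertices) or by fact (a). In the former case, if $G$ has an edge cut of size $t$ that is "realized" by $C$, then since each such edge's copy $H_e$ contributes at least one edge to $C$ we get a weak bound $|C|\geq t\geq 2$, which is not enough — so I need to be sharper: at least one copy $H_e$ with $e$ in that $G$-cut must actually have its gluing vertices separated by $C$, which by fact (a) costs $\geq n-2$ edges from that single copy, and I must scrape together one more edge from elsewhere (another copy in the cut, or the degree bound on an internal vertex of some partially-cut copy) to reach $n-1$. This "find one more edge" step, handled via the $2$-edge-connectivity of $G$ (which guarantees every edge cut of $G$ has size $\geq 2$, so there is a second copy in play), is where I would spend the most care.

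The main obstacle I anticipate is getting the exact constant right and handling the boundary cases cleanly: in particular pinning down $\lambda_{K_n^-}(u,v)$ (degree argument gives only $n-2$, and one must decide whether the claim's bound $n-1$ forces us to always borrow an extra edge, or whether some copies contribute $n-1$ internally), and ruling out the possibility that a minimum cut of $\widehat G$ "spreads thinly" across many copies each contributing exactly one edge — which cannot happen because a single edge removed from a copy $H_e$ of $K_n^-$ never disconnects it (as $K_n^-$ is $(n-2)$-edge-connected with $n-2\geq 1$), so a copy contributing edges to $C$ while still being "internally connected on each side of $S$" forces its two gluing vertices onto the same side, hence contributes $\geq n-2$ edges the moment it is genuinely split. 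Once those structural observations are in place, the bound $|C|\geq n-1$ follows by a short case check, and matching tightness (a cut of exactly $n-1$ edges) comes from isolating a single internal vertex of one copy, so the edge connectivity is exactly $n-1$.

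\begin{proof}[Proof sketch]
See the plan above; a full proof proceeds by analyzing a minimum edge cut of $G[K_n^-(u,v)]$ copy-by-copy, using that each copy of $K_n^-$ is $(n-2)$-edge-connected, that its internal vertices have degree $n-1$, and that $G$ being $2$-edge-connected forces at least two copies to participate in any global cut that respects the block structure.
\end{proof}
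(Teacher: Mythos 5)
The paper gives no proof of this lemma at all (it is dismissed as ``straightforward''), so there is no argument of the authors to compare against; your plan is the natural argument, and it is essentially correct. Your self-correction is right: since $u$ and $v$ are nonadjacent in $K_n^-$ we have $\deg(u)=\deg(v)=n-2$, and the $u$--$v$ edge connectivity of $K_n^-$ is exactly $n-2$ (Menger, via the $n-2$ internal two-edge paths), while separating a nonempty set of $a$ internal vertices of a copy from the rest of that copy costs $a(n-a)\geq n-1$ edges. Now take a minimum edge cut $C$ of $G[K_n^-(u,v)]$ with sides $S$ and its complement. Any copy with vertices on both sides contributes a nonempty edge cut of $K_n^-$ to $C$, hence at least $n-2$ edges, and at least $n-1$ edges if its two gluing vertices lie on the same side. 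If one side of the cut contains no vertex of $G$, every copy it meets is of the latter type and you are done. Otherwise the $G$-edges whose two gluing vertices are separated by $C$ form an edge cut of $G$, which has at least two edges by $2$-edge-connectivity, and each corresponding copy is split between its gluing vertices, so $|C|\geq 2(n-2)\geq n-1$ for $n\geq 3$. In particular the ``scrape together one more edge'' step you flagged as delicate is automatic: the second crossing copy already contributes $n-2\geq 1$ edges.

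Two small clean-ups. Your degenerate case should be ``all vertices of $G$ lie on one side of the cut,'' not ``$S$ lies inside a single copy'': the small side could consist of internal vertices drawn from several copies, but then each affected copy still contributes at least $n-1$ edges, so your fact (b) closes that case too. Likewise, ``the edges of $G$ whose copies meet $C$ form an edge cut of $G$'' is an overstatement -- a copy can meet $C$ with both gluing vertices on one side (in which case fact (b) already finishes the proof); the set you actually want is the set of $G$-edges whose gluing vertices lie on opposite sides. With these adjustments your sketch is a complete proof, and your tightness observation (isolating one internal vertex uses exactly $n-1$ edges) shows the edge connectivity is exactly $n-1$.
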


In order to find ATR roots of an edge substitution $G[K_n^-(u,v)]$ using Corollary \ref{EdgeSubstitutionCor}, we will require formulas for $\atrel(K_n^-;q)$ and $\splitRel_{\{u,v\}}(K_n^-;q).$  We find recursions for $\atrel(K_n^-;q)$ and $\splitRel_{\{u,v\}}(K_n^-;q)$ that are similar to the well-known recursion for $\atrel(K_n;q)$ (see \cite{ColbournBook}, for example):
\[
\atrel(K_n;q)=1-\sum_{i=1}^{n-1}\binom{n-1}{i-1}q^{i(n-i)}\atrel(K_i;q),
\]
with the base case $\atrel(K_1;q)=1.$

\begin{proposition}\label{Recursions}
For any $n\geq 2,$ we have
\begin{align}
\label{Kn-}
\begin{split}
\atrel(K_n^-;q)=&1-\sum_{i=1}^{n-1}\binom{n-2}{i-1}q^{i(n-i)-1}\atrel(K_i;q)\\
&\hspace{0.5cm}-\sum_{i=3}^{n-1}\binom{n-2}{i-2}q^{i(n-i)}\atrel(K_i^-;q), \mbox{ and}
\end{split}\\
\splitRel_{\{u,v\}}(K_n^-;q)=&\sum_{i=1}^{n-1}\binom{n-2}{i-1}q^{i(n-i)-1}\atrel(K_{i};q)\atrel(K_{n-i};q), \label{spKn-}
\end{align}
where $u$ and $v$ are nonadjacent in $K_n^-$.
\end{proposition}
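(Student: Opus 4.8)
The plan is to derive both recursions by the same conditioning argument used for the classical recursion for $\atrel(K_n;q)$, namely conditioning on the connected component containing a fixed vertex. Fix a vertex $w$ of $K_n^-$, and first consider the case where $w$ is an endpoint of the missing edge (this is one of the two special vertices $u,v$; say $w=u$). For $\atrel(K_n^-;q)$: in \emph{any} state, $u$ lies in some connected component $C$ with $|C|=i$ for some $i\in\{1,\ldots,n\}$; the state is operational precisely when $i=n$. Summing the probability that $C$ is exactly the component of $u$ over all $i$ and all choices of $C$ gives $1$, and isolating the $i=n$ term yields $\atrel(K_n^-;q)=1-\sum_{i=1}^{n-1}(\text{stuff})$. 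The key bookkeeping is that the component $C$ (with $u\in C$, $|C|=i$) falls into two types: (a) $v\notin C$, in which case the induced subgraph on $C$ is a complete graph $K_i$ (the only missing edge of $K_n^-$ is $uv$, and $v$ is outside), and the number of edges from $C$ to its complement is $i(n-i)-1$ (one fewer than in $K_n$, since the $uv$ edge is absent); there are $\binom{n-2}{i-1}$ such sets (choose the other $i-1$ vertices of $C$ from the $n-2$ vertices other than $u,v$); (b) $v\in C$, in which case the induced subgraph on $C$ is $K_i^-$ (missing exactly the edge $uv$), this forces $i\geq 2$, but $i=2$ gives the disconnected graph $K_2^-$ which has reliability $0$, so effectively $i\geq 3$; the number of boundary edges is $i(n-i)$; and there are $\binom{n-2}{i-2}$ such sets (choose the remaining $i-2$ vertices from $n-2$). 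Assembling these gives \eqref{Kn-}.

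For \eqref{spKn-}: here we want the probability that every vertex communicates with exactly one of $u,v$. Condition on the component $C$ containing $u$; split reliability requires $v\notin C$, so only type (a) above occurs, and moreover the complement $V\setminus C$ must itself be connected (every vertex outside $C$ must reach $v$). So $|C|=i$ ranges over $\{1,\ldots,n-1\}$; the induced subgraph on $C$ is $K_i$ with probability $\atrel(K_i;q)$ of being connected; the induced subgraph on $V\setminus C$ is $K_{n-i}$ with probability $\atrel(K_{n-i};q)$ of being connected (note $v$ is not an endpoint of a missing edge \emph{within} $V\setminus C$ since the only missing edge $uv$ has $u\notin V\setminus C$); the $i(n-i)-1$ edges between $C$ and $V\setminus C$ must all fail; and there are $\binom{n-2}{i-1}$ choices for $C$. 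Multiplying and summing gives \eqref{spKn-}. One should double-check that demanding $C$ and $V\setminus C$ be connected and the cut fail is \emph{exactly} the split condition: if both sides are connected and the cut is down, then vertices in $C$ reach $u$ only and vertices in $V\setminus C$ reach $v$ only, so each vertex reaches exactly one of $u,v$; conversely any split state partitions $V$ into (the vertices reaching $u$) and (the vertices reaching $v$), each a connected piece with no surviving edge between them.

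The only genuine subtlety is that the formulas are asserted for the gadget $K_n^-(u,v)$ with $u,v$ \emph{nonadjacent}, so one must make sure the conditioning vertex is taken to be $u$ (or $v$) rather than an arbitrary vertex, and that the small cases ($i=2$ in type (b), where $K_2^-$ is an edgeless graph on two vertices) are handled by noting $\atrel(K_2^-;q)=0$ so the sum may start at $i=3$; likewise the base cases $\atrel(K_1;q)=1$ and $\atrel(K_2;q)=1-q$ make the $i=1$ and small-$i$ terms meaningful. I expect the main obstacle — really the only place to be careful — is the combinatorial identification of the two component types for $\atrel(K_n^-;q)$ and the correct binomial coefficients and boundary-edge counts in each case; once that casework is set up correctly, everything else is a direct transcription of the standard component-conditioning argument and a routine check that the exponents $i(n-i)-1$ versus $i(n-i)$ are right.
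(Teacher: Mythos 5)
Your proposal is correct and follows essentially the same argument as the paper: both condition on the connected component of the nonadjacent vertex $u$, splitting into the cases $v\notin C$ (inducing $K_i$, with $i(n-i)-1$ cut edges and $\binom{n-2}{i-1}$ choices) and $v\in C$ (inducing $K_i^-$, with $i(n-i)$ cut edges and $\binom{n-2}{i-2}$ choices), and derive the split-reliability formula from the requirement that both sides of the failed cut be connected. Your explicit handling of the $i=2$ term via $\atrel(K_2^-;q)=0$ and the verification that the split condition corresponds exactly to a two-component partition are details the paper leaves implicit, but the approach is the same.
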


\begin{proof}
The $i$th term in the first sum on the right-hand side of (\ref{Kn-}) gives the probability that the vertex $u$ cannot communicate with $v$ but can communicate with exactly $i$ vertices in $K_n^-$ (including itself).  The $i$th term in the second sum on the right-hand side of (\ref{Kn-}) gives the probability that the vertex $u$ can communicate with $v$ and can communicate with exactly $i$ vertices (including itself and $v$).  Subtracting these probabilities from $1$ for all $i\leq n-1$ leaves the probability that $u$ can communicate with exactly $n$ vertices, i.e.\ the all-terminal reliability of $K_n^-$, and thus (\ref{Kn-}) holds.

For (\ref{spKn-}), in any operational state for the $\{u,v\}$-split reliability of $K_n^-,$ the vertex $u$ must be able to communicate with exactly $i$ vertices (including itself) for some $i\in\{1,\hdots,n-1\},$ while the vertex $v$ must be able to communicate with all of the remaining $n-i$ vertices (including itself).  The probability that $u$ can communicate with exactly $i$ vertices while $v$ can communicate with exactly the remaining $n-i$ vertices is given by
\[
\binom{n-2}{i-1}q^{i(n-i)-1}\atrel(K_{i};q)\atrel(K_{n-i};q).
\]
Summing over $i\in\{1,\hdots,n-1\}$ gives $\splitRel_{\{u,v\}}(K_n^-;q),$ and thus (\ref{spKn-}) holds.
\end{proof}

Our examples of simple graphs with ATR roots outside of the unit disk are all of the form
\[
G^{(k,n)}=G_{3,3}^{k,6k}[K_n^-(u,v)]
\]
for $k\geq 1 $ and $3\leq n\leq 6.$  We start with the base graph $G_{3,3}^{1,6},$ replace every edge with a bundle of $k$ edges, and then substitute the gadget $K_n^-(u,v)$ for every edge.  Before looking at particular examples we outline our general procedure for demonstrating that some $G^{(k,n)}$ has an ATR root outside of the unit disk.

We have found numerically that a particular ATR root $R$ of the graph $G_{3,3}^{1,6}$ satisfies
\begin{align}\label{ReR}
0.69659\leq \Re(R)\leq 0.69660
\end{align}
and
\begin{align}\label{ImR}
0.77393\leq \Im(R)\leq 0.77394.
\end{align}
This is one of the ATR roots of $G_{3,3}^{1,6}$ of greatest modulus (its conjugate $\overline{R}$ would work just as well).  By Proposition \ref{EdgeSubstitution}, the all-terminal reliability of the graph $G_{3,3}^{k,6k}$ obtained from $G_{3,3}^{1,6}$ by replacing each edge with a bundle of $k\geq 1$ edges is given by
\[
\atrel\left(G_{3,3}^{k,6k};q\right)=\atrel\left(G_{3,3}^{1,6};q^k\right),
\]
as the all-terminal reliability of a bundle of $k$ edges is $1-q^k$ and the split reliability is $q^k.$  This means that $\sqrt[k]{R}$ is an ATR root of $G_{3,3}^{k,6k}.$

Now by Corollary \ref{EdgeSubstitutionCor}, any solution of the equation 
\begin{align}\label{sqrtRequation}
\splitRel_{\{u,v\}}(K_n^-;q)=\frac{\sqrt[k]{R}}{1-\sqrt[k]{R}}\cdot\atrel(K_n^-;q)
\end{align}
must be an ATR root of $G^{(k,n)}=G_{3,3}^{k,6k}[K_n^-(u,v)]$ or an ATR root of $K_n^-.$  We have verified that all ATR roots of $K_n^-$ lie inside the unit disk for $3\leq n\leq 6,$ so we may conclude that any solution of (\ref{sqrtRequation}) that lies outside of the unit disk must be an ATR root of $G^{(k,n)}.$  Note that both $\splitRel_{\{u,v\}}(K_n^-;q)$ and $\atrel(K_n^-;q)$ have a factor of $(1-q)^{n-2},$ so we may consider the equation
\begin{align} \label{KnMinusEdge}
\frac{\splitRel_{\{u,v\}}(K_n^-;q)}{(1-q)^{n-2}}=\frac{\sqrt[k]{R}}{1-\sqrt[k]{R}}\cdot\frac{\atrel(K_n^-;q)}{(1-q)^{n-2}}
\end{align}
instead.  The bounds (\ref{ReR}) and (\ref{ImR}) on the real and imaginary parts of the original root $R$ of $G_{3,3}^{1,6}$ translate to bounds on the real and imaginary parts of $\frac{\sqrt[k]{R}}{1-\sqrt[k]{R}}.$  For any real numbers $a$ and $b$ satisfying these bounds (respectively), we apply Theorem \ref{SchurCohnTest} to the polynomial
\[
f_n(q)=\frac{\splitRel_{\{u,v\}}(K_n^-;q)}{(1-q)^{n-2}}-(a+bi)\cdot\frac{\atrel(K_n^-;q)}{(1-q)^{n-2}}.
\]
We are able to determine the sign of all of the required determinants using only the bounds on $a$ and $b.$  By considering the sequence of determinants, we will conclude that there are solutions of (\ref{KnMinusEdge}) that lie outside of the unit disk,  which must be ATR roots of $G[K_n^-(u,v)].$

In the case that $n=3,$ note that $K_3^-\cong P_3$ so that the graph 
\[
G^{(k,3)}=G_{3,3}^{k,6k}[K_3^-(u,v)]
\] 
is obtained from $G_{3,3}^{1,6}$ by only parallel and series substitutions.  Hence this graph is constructed in a similar manner to the smallest simple graph found to have an ATR root outside of the unit disk in \cite{BCFalse}.  The main difference is the choice of the base graph $G_{3,3}^{1,6}$ here instead of $G_{2,2}^{11,1}.$

\begin{proposition}\label{SimpleGraph}
The simple graph $G^{(9,3)}=G_{3,3}^{9,54}[K_3^-(u,v)]$ on $546$ vertices and $1080$ edges has an ATR root outside of the unit disk.
\end{proposition}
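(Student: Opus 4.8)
The plan is to run the general procedure described above with $k=9$ and $n=3$, where everything simplifies because $K_3^-$ is a path of length $2$. First I would record the gadget data: with $u,v$ the two endpoints of $K_3^-$ we have $\atrel(K_3^-;q)=(1-q)^2$ and, by Proposition~\ref{Recursions} (or by listing the two split states directly), $\splitRel_{\{u,v\}}(K_3^-;q)=2q(1-q)$. Both are divisible by $(1-q)^{n-2}=(1-q)$, so equation (\ref{KnMinusEdge}) with $n=3$ collapses to the linear equation $2q=\tfrac{\sqrt[9]{R}}{1-\sqrt[9]{R}}\,(1-q)$, whose unique solution is
\[
q^\ast=\frac{w}{2+w},\qquad\text{where}\qquad w=\frac{\sqrt[9]{R}}{1-\sqrt[9]{R}}
\]
and $\sqrt[9]{R}$ is the principal ninth root of the root $R$ of $\atrel(G_{3,3}^{1,6};q)$ obeying (\ref{ReR})--(\ref{ImR}). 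Since $\atrel(G_{3,3}^{9,54};q)=\atrel(G_{3,3}^{1,6};q^9)$ (Proposition~\ref{EdgeSubstitution}), $\sqrt[9]{R}$ is an ATR root of $G_{3,3}^{9,54}$, and $\sqrt[9]{R}\neq1$ because $|R|>1$ by (\ref{ReR})--(\ref{ImR}); Corollary~\ref{EdgeSubstitutionCor} then says $q^\ast$ is an ATR root of $G^{(9,3)}$ or of $K_3^-$. As $\atrel(K_3^-;q)=(1-q)^2$ has only the root $q=1$, it suffices to prove $|q^\ast|>1$.

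Next I would translate $|q^\ast|>1$ into a statement about $R$ alone. A short computation gives $|q^\ast|^2-1=\dfrac{|w|^2-|2+w|^2}{|2+w|^2}=\dfrac{-4\bigl(1+\Re(w)\bigr)}{|2+w|^2}$, so $|q^\ast|>1$ exactly when $\Re(w)<-1$; and since $w+1=\dfrac1{1-\sqrt[9]{R}}$, we get $\Re(w)+1=\dfrac{1-\Re(\sqrt[9]{R})}{|1-\sqrt[9]{R}|^2}$, so that $|q^\ast|>1$ if and only if $\Re(\sqrt[9]{R})>1$. (Equivalently, applying Theorem~\ref{SchurCohnTest} to the degree-one polynomial $f_3(q)=2q-(a+bi)(1-q)$ gives $M_1=4(1+a)$, so its single root lies outside the unit circle precisely when $a=\Re(w)<-1$; this is just the $n=3$ instance of the method used for $n\ge4$.) It therefore remains to verify $\Re(\sqrt[9]{R})>1$ using only the bounds (\ref{ReR}) and (\ref{ImR}).

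For this, writing $R=\rho e^{i\phi}$ with $\rho=|R|$ and $\phi=\arg R\in(0,\tfrac{\pi}{2})$, we have $\Re(\sqrt[9]{R})=\rho^{1/9}\cos(\phi/9)$. On the rectangle $[0.69659,0.69660]\times[0.77393,0.77394]$ this quantity is increasing in $\rho$ and decreasing in $\phi$, hence it is at least $\rho_0^{1/9}\cos(\phi_0/9)$, where $\rho_0=\sqrt{0.69659^2+0.77393^2}$ is the minimum possible value of $\rho$ and $\phi_0=\arctan(0.77394/0.69659)$ is the maximum possible value of $\phi$. Substituting a certified rational lower bound for $\rho_0$ and a certified rational upper bound for $\phi_0$, together with guaranteed error bounds for $x\mapsto x^{1/9}$ and for $\cos$, yields $\rho_0^{1/9}\cos(\phi_0/9)>1$; numerically $\Re(\sqrt[9]{R})\approx1.00015$, while $\Re(\sqrt[9]{R})$ varies by only about $10^{-6}$ over the whole $10^{-5}\times10^{-5}$ rectangle, so there is ample slack. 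With $|q^\ast|>1$ in hand, $q^\ast$ is an ATR root of $G^{(9,3)}$ outside the unit disk, and a routine count (each of the $60$ edges of $G_{3,3}^{1,6}$ becomes a bundle of $9$, so $G_{3,3}^{9,54}$ has $540$ edges on $6$ vertices, and then each edge becomes a path of length $2$) confirms that $G^{(9,3)}$ is a simple graph on $546$ vertices and $1080$ edges.

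The main obstacle is the narrowness of the margin: $\Re(\sqrt[9]{R})$ exceeds $1$ by only about $1.5\times10^{-4}$, which is exactly why the ninth power is needed --- for every $k\le8$ one checks $\rho^{1/k}\cos(\phi/k)<1$, so this argument would not produce a root outside the unit disk for the corresponding substitution into $G_{3,3}^{k,6k}$. Thus the only genuine work lies in making the final estimate rigorous: producing explicit rational enclosures of $\sqrt{0.69659^2+0.77393^2}$, of $\arctan(0.77394/0.69659)$, and of the relevant values of $x\mapsto x^{1/9}$ and $\cos$, and confirming that the resulting lower bound for $\rho^{1/9}\cos(\phi/9)$ genuinely exceeds $1$. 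Since the rectangle for $R$ contributes uncertainty several orders of magnitude below this margin, the verification is straightforward.
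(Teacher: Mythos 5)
Your proposal is correct and follows essentially the same route as the paper: reduce via Corollary \ref{EdgeSubstitutionCor} (with $r=\sqrt[9]{R}$, using $\atrel(G_{3,3}^{9,54};q)=\atrel(G_{3,3}^{1,6};q^9)$) to a single linear equation coming from $\atrel(K_3^-;q)=(1-q)^2$ and $\splitRel_{\{u,v\}}(K_3^-;q)=2q(1-q)$, and then certify that its unique root lies outside the unit disk uniformly over the numerically certified rectangle for $R$. The only difference is cosmetic: the paper converts the rectangle into bounds on $a=\Re\bigl(\tfrac{\sqrt[9]{R}}{1-\sqrt[9]{R}}\bigr)$ and applies the Schur--Cohn criterion $M_1=4a+4<0$, while you solve the linear equation explicitly and verify the equivalent condition $\Re(\sqrt[9]{R})>1$ via monotonicity of $\rho^{1/9}\cos(\phi/9)$ --- an equivalence you yourself note, and your numerical margins match the paper's.
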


\begin{proof}
Recall that a particular ATR root $R$ of the graph $G_{3,3}^{1,6}$ satisfies (\ref{ReR}) and (\ref{ImR}).  From these bounds we are able to obtain
\[
-1.01749\leq \Re\left(\frac{\sqrt[9]{R}}{1-\sqrt[9]{R}}\right)\leq -1.01731
\]
and
\[
10.70762\leq \Im\left(\frac{\sqrt[9]{R}}{1-\sqrt[9]{R}}\right)\leq 10.70814.
\]
Thus, it suffices to show that the polynomial
\begin{align}
f_3(q)=\frac{\splitRel_{\{u,v\}}(K_3^-;q)}{1-q}-(a+bi)\cdot\frac{\atrel(K_3^-;q)}{1-q}.\label{f3q}
\end{align}
has a root outside of the unit disk for \textit{any} real numbers $a$ and $b$ satisfying 
\begin{align}\label{abBounds3}
-1.01749\leq a\leq -1.01731
\ \ \ \mbox{ and } \ \ \ 
10.70762\leq b\leq 10.70814.
\end{align}

Using the recursions of Proposition \ref{Recursions}, we find $\atrel(K_3^-;q)=(1-q)^2$ and $\splitRel_{\{u,v\}}(K_3^-;q)=2q(1-q).$  Substituting these expressions into (\ref{f3q}), we obtain 
\begin{align*}
f_3(q)&=\left(2+a+bi\right)q-(a+bi).
\end{align*}
Applying the test of Theorem \ref{SchurCohnTest} to $f_3(q)$ with $a$ and $b$ as parameters, we get the single determinant
\[
M_1=4a+4.
\]
In particular, from (\ref{abBounds3}) we know that $a<-1,$ and hence $M_1<0.$  Therefore, the number of sign changes $\mathrm{B}(1,M_1)=1,$ and we conclude that $f_3(q)$ has a root outside of the unit disk for any $a$ and $b$ satisfying (\ref{abBounds3}).
\end{proof}

Since the graph $G_{3,3}^{9,54}[K_3^-(u,v)]$ has $546$ vertices and $1080$ edges, it is just over one third of the order and size of the smallest previously known simple graph with ATR roots outside of the unit disk (the smallest such graph found in \cite{BCFalse} has 1512 vertices and 3016 edges).  We stress that the only real difference between our graph and the graph from \cite{BCFalse} in the choice of the graph that we start from before performing the edge substitutions.  We tested many different base graphs of the form $G_{m,n}^{a,b}$ and the base graph $G_{3,3}^{1,6}$ produced the smallest simple graph with ATR roots outside of the unit disk.

It may seem as though our use of Theorem \ref{SchurCohnTest} in Proposition \ref{SimpleGraph} is a little bit heavy-handed, as $f_3(q)$ is a linear function, and we could have verified that its single root is outside of the unit disk directly.  However, Theorem \ref{SchurCohnTest} plays a much more important role in the proof of the following proposition (we omit some details).

\begin{proposition}\label{2EdgeConnected}
The $3$-edge-connected simple graph 
\[
G^{(7,4)}=G_{3,3}^{7,42}[K_4^-(u,v)]
\]
has an ATR root outside of the unit disk.
\end{proposition}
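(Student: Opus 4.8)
The plan is to follow exactly the same strategy as in Proposition \ref{SimpleGraph}, but now with the gadget $K_4^-(u,v)$ in place of $K_3^-(u,v)$ and with $k=7$ in place of $k=9.$ First I would translate the known bounds (\ref{ReR}) and (\ref{ImR}) on the root $R$ of $G_{3,3}^{1,6}$ into bounds on the real and imaginary parts of $\tfrac{\sqrt[7]{R}}{1-\sqrt[7]{R}}$ (a routine interval-arithmetic computation on the complex seventh root followed by the M\"obius map $z\mapsto z/(1-z)$); call the resulting intervals $a\in[a_-,a_+]$ and $b\in[b_-,b_+].$ Second, using the recursions of Proposition \ref{Recursions}, I would compute $\atrel(K_4^-;q)$ and $\splitRel_{\{u,v\}}(K_4^-;q)$ explicitly, divide each by the common factor $(1-q)^{n-2}=(1-q)^2,$ and form the polynomial
\[
f_4(q)=\frac{\splitRel_{\{u,v\}}(K_4^-;q)}{(1-q)^2}-(a+bi)\cdot\frac{\atrel(K_4^-;q)}{(1-q)^2},
\]
which will have degree $2$ in $q$ with coefficients that are affine functions of $a+bi.$

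Third, I would apply Theorem \ref{SchurCohnTest} to $f_4(q)$ with $a$ and $b$ as symbolic parameters. Since $\deg f_4=2,$ there are two determinants $M_1$ and $M_2$ to evaluate; $M_1$ is (up to sign) the usual $|a_0|^2-|a_2|^2$ quantity and $M_2$ is a $4\times 4$ determinant, both polynomials in $a,b$ with rational (in fact integer) coefficients. The goal is to show that over the entire parameter box (\ref{abBounds3})-type region for $n=4,$ the sign pattern of the sequence $(1,M_1,M_2)$ has exactly one sign change, i.e.\ $\mathrm{B}(1,M_1,M_2)=1,$ so that $f_4$ has exactly one root outside the unit circle. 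By Corollary \ref{EdgeSubstitutionCor} and the fact that all ATR roots of $K_4^-$ lie in the unit disk, that root is an ATR root of $G^{(7,4)},$ completing the proof.

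The main obstacle will be Step three: verifying the constancy of the sign pattern of $M_1$ and $M_2$ across the whole rectangle of admissible $(a,b)$ rather than at a single point. Because $M_1$ and $M_2$ are low-degree polynomials in $a$ and $b$ and the rectangle is tiny (the intervals for $a$ and $b$ have width on the order of $10^{-4}$), in practice each $M_j$ will be monotone or of fixed sign on the box, and one can certify the sign simply by evaluating $M_j$ at the (at most four) corners of the box after checking that no critical curve $M_j=0$ crosses it — or, more robustly, by bounding $|M_j|$ away from $0$ using interval arithmetic. This is the ``some details'' the authors say they omit; the bookkeeping is more involved than in the linear case $n=3$ because $M_2$ is genuinely a determinant of $a+bi$-dependent entries, but there is no conceptual difficulty. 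Everything else — the seventh-root interval computation, the explicit small-graph reliability polynomials, and the edge-connectivity claim (which follows from the Lemma preceding Proposition \ref{SimpleGraph}, since $G_{3,3}^{7,42}$ is $2$-edge-connected, giving $(n-1)=3$-edge-connectivity) — is mechanical.
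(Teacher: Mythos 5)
Your strategy is exactly the paper's: translate the bounds on $R$ into an interval box for $a+bi=\sqrt[7]{R}/(1-\sqrt[7]{R})$, form $f_4(q)$ from the $K_4^-$ reliability and split-reliability polynomials divided by $(1-q)^2$, and run the parametric Schur--Cohn test of Theorem \ref{SchurCohnTest} with the sign pattern certified over the whole box (the paper likewise only asserts the signs can be determined from the bounds on $a$ and $b$). One concrete slip, though: $f_4$ has degree $3$, not $2$. Indeed $\atrel(K_4^-;q)/(1-q)^2=(1-q)(4q^2+3q+1)$ and $\splitRel_{\{u,v\}}(K_4^-;q)/(1-q)^2=2q^2(3q+1)$ are cubic (in general $\deg f_n=\binom{n-1}{2}$), so the test involves three determinants; the paper finds $M_1,M_2>0$ and $M_3<0$, whence $\mathrm{B}(1,M_1,M_2,M_3)=1$ and exactly one root lies outside the unit disk. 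With that bookkeeping corrected (and your appeal to Corollary \ref{EdgeSubstitutionCor}, the verified fact that $K_4^-$ has no ATR roots outside the unit disk, and the edge-connectivity lemma all being as in the paper), your argument is the paper's proof.
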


\begin{proof}
Using an argument similar to that of the previous Theorem, it suffices to show that the polynomial
\begin{align*}
f_4(q)&=\frac{\splitRel_{\{u,v\}}(K_4^-;q)}{(1-q)^2}-(a+bi)\cdot\frac{\atrel(K_4^-;q)}{(1-q)^2}\\
&=2(3q+1)q^2-(a+bi)(1-q)(4q^2+3q+1)
\end{align*}
has a root outside of the unit disk for any real numbers $a$ and $b$ satisfying
\begin{align}\label{abBounds4}
-0.90269\leq a\leq -0.90254 \ \ \ \mbox{ and } \ \ \ 8.32420 \leq b\leq 8.32462.
\end{align}

Applying Theorem \ref{SchurCohnTest} to $f_4(q)$ with $a$ and $b$ as parameters, we compute the determinants $M_1,$ $M_2,$ and $M_3$ and find that $M_1,M_2>0$ and $M_3<0,$ so that $\mathrm{B}(1,M_1,M_2,M_3)=1,$ and therefore $f_4(q)$ has exactly one solution outside of the unit disk for any $a$ and $b$ satisfying (\ref{abBounds4}).
\end{proof}

Using the same procedure as in the proof of Proposition \ref{2EdgeConnected}, we can demonstrate that there are $4$-edge-connected and $5$-edge-connected graphs with ATR roots outside of the unit disk.

\begin{proposition}
Both the $4$-edge-connected simple graph $G^{(6,5)}$ and the $5$-edge-connected simple graph $G^{(6,6)}$ have an ATR root outside of the unit disk. \hfill\qed
\end{proposition}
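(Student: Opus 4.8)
The plan is to follow the proof of Proposition \ref{2EdgeConnected} almost verbatim, with $k=6$ and $n\in\{5,6\}$, so that $G^{(6,5)}=G_{3,3}^{6,36}[K_5^-(u,v)]$ and $G^{(6,6)}=G_{3,3}^{6,36}[K_6^-(u,v)]$. First I would recall the particular ATR root $R$ of $G_{3,3}^{1,6}$ from (\ref{ReR}) and (\ref{ImR}); by Proposition \ref{EdgeSubstitution}, $\atrel(G_{3,3}^{6,36};q)=\atrel(G_{3,3}^{1,6};q^6)$, so $\sqrt[6]{R}$ (for the appropriate branch of the sixth root) is an ATR root of $G_{3,3}^{6,36}$. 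Pushing the rectangular enclosure of $R$ through the map $z\mapsto \tfrac{\sqrt[6]{z}}{1-\sqrt[6]{z}}$ by a short interval-arithmetic computation yields explicit narrow intervals $[a_-,a_+]$ and $[b_-,b_+]$ containing $\Re\!\bigl(\tfrac{\sqrt[6]{R}}{1-\sqrt[6]{R}}\bigr)$ and $\Im\!\bigl(\tfrac{\sqrt[6]{R}}{1-\sqrt[6]{R}}\bigr)$, respectively. By Corollary \ref{EdgeSubstitutionCor}, every solution of (\ref{sqrtRequation}) (equivalently (\ref{KnMinusEdge})) that lies outside the unit disk is an ATR root of $G^{(6,n)}$, since all ATR roots of $K_n^-$ lie inside the unit disk for $3\le n\le 6$.

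Next I would use the recursions of Proposition \ref{Recursions} to produce closed forms for $\atrel(K_n^-;q)$ and $\splitRel_{\{u,v\}}(K_n^-;q)$ when $n=5$ and $n=6$, divide each by $(1-q)^{n-2}$, and, treating $a$ and $b$ as parameters, form
\[
f_n(q)=\frac{\splitRel_{\{u,v\}}(K_n^-;q)}{(1-q)^{n-2}}-(a+bi)\cdot\frac{\atrel(K_n^-;q)}{(1-q)^{n-2}}.
\]
A degree count shows $\deg f_n=\binom{n-1}{2}$ (both terms have this degree, and the leading coefficient of $f_n$ has nonzero imaginary part because $b>0$), so $\deg f_5=6$ and $\deg f_6=10$. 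I would then apply Theorem \ref{SchurCohnTest} to $f_n$ with $a,b$ as symbolic parameters, obtaining $\binom{n-1}{2}$ determinants $M_1,\dots,M_{\binom{n-1}{2}}$, each a polynomial in $a$ and $b$ with rational coefficients (real-valued by the Hermitian block structure).

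The final step is to certify, using only the inequalities $a\in[a_-,a_+]$ and $b\in[b_-,b_+]$, that each $M_k$ is nonzero on this rectangle and to pin down its sign — for instance by interval arithmetic. Guided by the $n=3,4$ cases, I expect the sign sequence of $1,M_1,\dots,M_{\binom{n-1}{2}}$ to exhibit exactly one sign change, so that $\mathrm{B}\!\bigl(1,M_1,\dots,M_{\binom{n-1}{2}}\bigr)=1$ and $f_n$ has exactly one root outside the unit circle; that root is not a root of $\atrel(K_n^-;q)$, so Corollary \ref{EdgeSubstitutionCor} makes it an ATR root of $G^{(6,n)}$. For the edge-connectivity assertions, $G_{3,3}^{6,36}$ is $2$-edge-connected, so the lemma preceding Proposition \ref{Recursions} gives that $G^{(6,5)}$ is $4$-edge-connected and $G^{(6,6)}$ is $5$-edge-connected.

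The main obstacle is entirely computational: as $n$ grows, $\deg f_n$ grows (from $6$ to $10$), so there are more Schur--Cohn determinants and each is a denser, higher-degree polynomial in $(a,b)$; verifying the sign of every $M_k$ over the rectangle — and in particular that none of them vanishes there — requires careful interval estimates rather than a one-line argument, which is presumably why the paper omits the details. Everything else (the recursions, the choice of branch for $\sqrt[6]{R}$, and the resulting reliability and split-reliability formulas) is routine given the machinery already established.
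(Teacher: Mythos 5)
Your proposal follows essentially the same route the paper takes: the paper proves this proposition by simply invoking ``the same procedure as in the proof of Proposition \ref{2EdgeConnected}'' --- i.e.\ bounding $\tfrac{\sqrt[6]{R}}{1-\sqrt[6]{R}}$ from the enclosure of $R$, forming $f_n(q)$ for $n=5,6$ via Proposition \ref{Recursions}, and applying the Schur--Cohn test of Theorem \ref{SchurCohnTest} with $a,b$ as interval parameters to certify one sign change, exactly as you outline (including the correct degrees $6$ and $10$ and the edge-connectivity count via the lemma). The only caveat is that, like the paper, you defer the actual determinant sign verification to computation, so your write-up matches the paper's level of rigor rather than exceeding it.
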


All of the important information about the simple graphs we have found with ATR roots outside of the unit disk is collected in Table \ref{HighConnectivityTable}.  While we suspect that our technique could be used to prove that there are simple graphs with yet higher edge connectivity with ATR roots outside of the unit disk, the time required to apply Theorem \ref{SchurCohnTest} to the polynomial $f_n(q)$ grows large very quickly; after all, the degree of $f_n(q)$ is $\binom{n-1}{2}.$  

\begin{table}
\centering
\begin{tabular}{c C{3cm} C{4cm} C{2cm} C{2cm}}
$n$ & Edge connectivity of $G^{(k,n)}$ & Smallest value $k$ for which $G^{(k,n)}$ has an ATR root outside of the unit disk & Number of vertices of $G^{(k,n)}$ & Number of edges of $G^{(k,n)}$\\\hline
3 & 2 & 9 & 546 & 1,080\\
4 & 3 & 7 & 846 & 2,100\\
5 & 4 & 6 & 1,086 & 3,240\\
6 & 5 & 6 & 1,446 & 5,040
\end{tabular}
\caption{Simple graphs with ATR roots outside of the unit disk.}
\label{HighConnectivityTable}
\end{table}

\vspace{0.2in}

Finally, we mention that our technique for finding ATR roots of simple graphs with high edge connectivity has another important application.  Let $P=\{z\in\mathbb{C}\colon\ \mathrm{Re}(z)\geq -1/2\}.$  It was proven in \cite{BCConjecture} that roots of the $F$-polynomial are dense in $P.$  Hence, for any $r\in P$ and any gadget $H(u,v),$ the solutions of
\begin{align}\label{spRel-Rel}
\splitRel_{\{u,v\}}(H;q)=r\cdot \atrel(H;q)
\end{align}
are limits of ATR roots by Corollary \ref{EdgeSubstitutionCor}, as the roots of a polynomial are a continuous function of its coefficients, and there are graphs whose $F$-polynomials have a root arbitrarily close to $r.$  In particular, when $r=0$ we have
\[
\splitRel_{\{u,v\}}(H;q)=0,
\]
which means that the roots of the $\{u,v\}$-split reliability of any gadget are limits of ATR roots.  For small $n$, the polynomial $\splitRel_{\{u,v\}}(G_{n,n}^{1,6};q)$ has roots outside of the unit disk for any choice of $u$ and $v.$  While none of these roots have modulus larger than the root of largest modulus of the corresponding polynomial $\atrel(G_{n,n}^{1,6};q),$ this demonstrates that roots of the $\{u,v\}$-split reliability polynomial might be useful for finding ATR roots of large modulus.

If we are more creative with our choice of $r,$ we can sometimes find roots of (\ref{spRel-Rel}) that are larger in modulus than any ATR root of $H$ alone.  For example, let $r=-1/2+3i,$ let $H=G_{12,12}^{1,6},$ and let $u$ and $v$ be any two vertices from the same $K_{12}$ in the construction of $H.$  Then (\ref{spRel-Rel}) has roots of modulus slightly larger than any roots of $\atrel(H;q),$ meaning that there are ATR roots of even larger modulus than those we found in Section \ref{LargerModulus}!

Our discussion in the previous two paragraphs shows that there is no inequality relation in general between the ATR roots of largest modulus of $H$ and $G[H(u,v)]$, as there are graphs $G$ for which $G[H(u,v)]$ has an ATR root of modulus larger than any ATR root of $H,$ and other graphs for which the opposite is true.  We can also demonstrate that there is no inequality relation between the roots of largest modulus of $G$ and $G[H(u,v)]$ in general.  Let $B_k(u,v)$ be the gadget on two vertices $u$ and $v$ with $k$ edges in between them.  For any graph $G$ with an ATR root outside of the unit disk and any $k\geq 2$, the ATR root of $G$ of largest modulus has modulus strictly greater than that of any ATR root of $G[B_k(u,v)]$, as $\atrel(B_k(u,v);q)=\atrel(G;q^k),$ and every ATR root of $G$ is pulled in towards the unit circle by this transformation.  On the other hand, we have found numerically that the graph $G_{3,3}^{18,108}[K_3^-(u,v)]$ has an ATR root of larger modulus than any ATR root of $G_{3,3}^{18,108}$ itself.  Other specific examples that fit this situation are $G_{4,4}^{11,66}[K_3^-(u,v)],$ $G_{5,5}^{9,54}[K_3^-(u,v)],$ and $G_{6,6}^{8,48}[K_3^-(u,v)].$

\section{Conclusion}

In this article, we have established several important results on the roots of all-terminal reliability polynomials.  In Section \ref{ATRUpperBound}, we demonstrated the first general upper bound on the modulus of an ATR root of a graph in terms of the order of the graph.  In Section \ref{LargerModulus}, we found ATR roots of larger modulus than any previously known.  Finally, in Section \ref{SimpleGraphSection}, we found the smallest known simple graph with ATR roots outside of the unit disk, and we found simple graphs with ATR roots outside of the unit disk that have higher edge connectivity than any previously known examples.  Our results suggest several different directions for future research.

\subsection*{ATR roots of largest modulus}

While we have proven a non-constant bound on the modulus of any ATR root, the question of whether ATR roots are bounded in modulus by some constant remains a tantalizing open problem.  With regards to finding roots of larger modulus than any currently known, we suspect that for $n\geq 13$ the graphs $G_{n,n}^{1,6}$ will have ATR roots of even larger modulus than $G_{12,12}^{1,6},$ though the computation becomes too lengthy for us to verify this directly.  What is the limiting behaviour of the modulus of the ATR root of largest modulus of the graph $G_{n,n}^{1,6}$?  We are not even certain that the sequence of moduli need be increasing.

\subsection*{Simple graphs with ATR roots outside of the unit disk}

Our study of simple graphs with ATR roots outside of the unit disk produced a smaller example than any previously known, although it is still rather large (it has 546 vertices and 1080 edges).  What is the smallest simple graph with ATR roots outside of the unit disk?

In addition to finding a smaller simple graph with ATR roots outside of the unit disk, we found simple graphs with edge connectivity as high as $5$ that have ATR roots outside of the unit disk.  This is notable as all previously known examples have edge connectivity $2,$ with many vertices of degree $2.$  We also have good candidates for simple graphs of even higher edge connectivity that have ATR roots outside of the unit disk, although the computations required to prove that the roots are outside of the unit disk become increasingly large.  Are there simple graphs with arbitrarily high edge connectivity that have ATR roots outside of the unit disk?  

Finally, while we produced examples of simple graphs that have ATR roots outside of the unit disk and have \textit{edge} connectivity higher than $2,$ the \textit{vertex} connectivity of all of our examples is still only $2.$  Are there simple graphs with vertex connectivity greater than $2$ that have ATR roots outside of the unit disk?  Every graph formed from an edge substitution by a gadget on 3 or more vertices has vertex connectivity at most $2,$ so that the theory we developed in Section \ref{SimpleGraphSection} does not lend itself well to solving this problem.

\subsection*{Split reliability}

All-terminal reliability and $\{u,v\}$-split reliability are simultaneously generalized by the following notion:  Let $G$ be a graph in which each edge fails independently with probability $q$ and let $K\subseteq V(G).$  The \textit{$K$-split reliability} of $G,$ denoted $\splitRel_K(G;q),$ is the probability that every vertex of $G$ can communicate with \textit{exactly} one vertex from $K.$  When $|K|=1$ this is all-terminal reliability and when $|K|=2$ this is $\{u,v\}$-split reliability.  

The $\{u,v\}$-split reliability has proven itself useful in the study of all-terminal reliability, but we believe that $K$-split reliability could have several applications outside of all-terminal reliability as well and is worthy of study in its own right.  For example, consider a network with a fixed set $K$ of \textit{leader nodes} which give orders or instructions, where we would like all of the other vertices to receive orders from exactly one of the leader nodes (to prevent confusion).  The condition for $K$-split reliability ensures that every node receives orders from exactly one leader node, and thus conflicting orders cannot be given.  Chain of command structures seem to be an obvious application of this concept.

We also note that $K$-split reliability gives a measure of the reliability of disconnected graphs.  Let $G$ be a graph with components $G_1,\hdots, G_k$ and let $K=\{v_1,\hdots,v_k\}$ where $v_i\in V(G_i)$ for $i\in\{1,\hdots,k\}.$  Then 
\[
\splitRel_K(G;q)=\prod_{i=1}^k\atrel(G_i;q).
\]
This seems to be a natural measure of the reliability of a disconnected network with edge failures.

\section*{Acknowledgements}
 
\noindent
The authors wish to thank the anonymous referees for their insightful comments and suggestions.  Research of Jason I.\ Brown is partially supported by grant RGPIN 170450-2013 from Natural Sciences and Engineering Research Council of Canada (NSERC).  Research of Lucas Mol is partially supported by an Alexander Graham Bell Canada Graduate Scholarship from NSERC.

\bibliographystyle{amsplain}

%Bibliography with BibTex:

\bibliography{OnTheRootsOfAllTerminalReliabilityBibliography}

%Manual Bibliography:

\begin{comment}

\end{comment}

\end{document}